\theoremstyle{plain}
    \newtheorem{theorem}{Theorem}[section]
    \newtheorem{corollary}[theorem]{Corollary}
    \newtheorem{lemma}[theorem]{Lemma}
    \newtheorem{proposition}[theorem]{Proposition}
    \newtheorem*{claim}{Claim}
\theoremstyle{definition}
    \newtheorem{definition}[theorem]{Definition}
\theoremstyle{remark}
    \newtheorem{remark}[theorem]{Remark}
\newcommand{\Ima}{\operatorname{Im}}
\newcommand{\Hom}{\operatorname{Hom}}
\newcommand{\sHom}{\underline{\Hom}}          
\newcommand{\Ext}{\operatorname{Ext}}
\newcommand{\sExt}{\underline{\Ext}}               
\newcommand{\Extrig}{\operatorname{Extrig}}
\newcommand{\sExtrig}{\underline{\Extrig}}      
\newcommand{\CH}{\operatorname{H}}
\newcommand{\DR}{\operatorname{H}_\text{\rm dR}}
\newcommand{\TDR}{\operatorname{T}_\text{\rm dR}}
\newcommand{\Lie}{\operatorname{Lie}}
\newcommand{\Pic}{\operatorname{Pic}}
\newcommand{\Alb}{\operatorname{Alb}}
\newcommand{\Div}{\operatorname{Div}}
\newcommand{\supp}{\operatorname{supp}}
\newcommand{\reg}{\text{\rm reg}}
\newcommand{\us}{\, \cdot \,}
\title{Height pairings of 1-motives}
\author{Carolina Rivera Arredondo}
\date{}
\email{rivera@math.unipd.it}
\address{Dipartimento di Matematica ``Tullio Levi-Civita'', via Trieste 63, I-35121 Padova, Italy}
\begin{document}

\maketitle

\begin{abstract}
The purpose of this work is to generalize, in the context of 1-motives, the $p$-adic height pairings constructed by B. Mazur and J. Tate on abelian varieties. Following their approach, we define a global pairing between the rational points of a 1-motive and its dual. We also provide local pairings between zero-cycles and divisors on a curve, which is done by considering its Picard and Albanese 1-motives.
\end{abstract}



\section{Introduction}

In \cite{MT83} Mazur and Tate gave a construction of a global pairing on the rational points of paired abelian varieties over a global field, as well as N\'eron-type local pairings between disjoint zero-cycles and divisors on an abelian variety over a local field. Their approach involved the concept of $\rho$-splittings of biextensions of abelian groups, which they mainly studied in the case of $K$-rational sections of a $\mathbb G_m$-biextension of abelian varieties over a local field. When certain requirements on the base field, the morphism $\rho$, and the abelian varieties are met, they proved the existence of canonical $\rho$-splittings for this type of biextensions, which they later used to construct canonical local pairings between disjoint zero-cycles and divisors on an abelian variety. By considering a global field endowed with a set of places and its respective completions, they were also able to construct a global pairing on the rational points of paired abelian varieties. \\

It will be of particular interest to us, the Poincar\'e biextension of an abelian variety and its dual defined over a non-archimedean local field of characteristic 0. When considering this biextension, there is another method of obtaining $\rho$-splittings, due to Zarhin \cite{ZA90}, starting from splittings of the Hodge filtration of the first de Rham cohomology group of the abelian variety. His construction coincides with Mazur and Tate's in the case that $\rho$ is unramified, or when $\rho$ is ramified and the splitting of the Hodge filtration is the one induced by the unit root subspace. In the latter case, the equality of both constructions is a result of Coleman \cite{CO91}, in the case of ordinary reduction, and of Iovita and Werner \cite{IW03}, in the case of semistable ordinary reduction. \\

For our generalization to 1-motives we will focus on the ramified case. Following Zarhin's approach, we construct $\rho$-splittings of the Poincar\'e biextension of a 1-motive and its dual starting from a pair of splittings of the Hodge filtrations of their de Rham realizations; this is done in Section \ref{sec:ramified}. In order to construct pairings from these $\rho$-splittings, we need them to be compatible with the canonical linearization associated to the biextension; the conditions under which this happens are studied in Section \ref{sec:linearization}. \\

In Section \ref{sec:local_pairing} we consider a semi-normal irreducible curve $C$ over a finite extension of $\mathbb Q_p$ and construct a local pairing between disjoint zero-cycles of degree zero on $C$ and on its regular locus $C_{\reg}$. We do this by considering the Poincar\'e biextension of the Picard and Albanese 1-motives of $C$. This construction generalizes the local pairing of Mazur and Tate \cite[p. 212]{MT83} in the case of elliptic curves. \\

Finally, in Section \ref{sec:global_pairing} we consider a 1-motive $M$ over a number field $F$, a set of places of $F$, and homomorphisms $\rho_v: F_v^* \to \mathbb Q_p$ (almost all vanishing on the units of the valuation ring), with $v$ running through the set of places, as well as a $\rho_v$-splitting $\psi_v$, for each $v$, on the $F_v$-rational sections of the Poincar\'e biextension $P$ of $M$ and its dual $M^\vee$ (satisfying certain properties). With this data we construct a global pairing between the $F$-rational points of $M$ and $M^\vee$ under the condition that, for each ramified $\rho_v$, the $\rho_v$-splitting $\psi_v$ is compatible with the canonical linearization of $P$. The pairing is defined similarly to the case of abelian varieties, hence generalizing the global pairing of Mazur and Tate \cite[Lemma 3.1, p. 214]{MT83} in the case of an abelian variety and its dual.



\subsection*{Acknowledgements}
The present article is a product of my PhD thesis which I carried out at the University of Milan and the University of Bordeaux within the framework of the ALGANT-DOC doctoral programme. I am deeply grateful to my advisor at Milan, where I spent most of the time, Luca Barbieri-Viale, for suggesting this topic and introducing me to this very interesting field of mathematics. I would also like to thank my advisors at Bordeaux, Boas Erez and Qing Liu, for their support and guidance.


\section{Preliminaries on abelian varieties and 1-motives}

\subsection{$\rho$-splittings on abelian varieties}

For the definition of biextension of abelian groups and group schemes we refer to \cite{MU69}.

\begin{definition}[{\cite[p. 199]{MT83}}]
Let $A$, $B$, $H$ , $Y$ be abelian groups and $P$ a biextension of $(A, B)$ by $H$. Let $\rho: H \to Y$ be a homomorphism. A \emph{$\rho$-splitting} of $P$ is a map $\psi: P \to Y$ such that
\begin{enumerate}[(i)]
\item $\psi(h+x)=\rho(h) + \psi(x)$, for all $h \in H$ and $x \in P$ and
\item for each $a \in A$ (resp. $b \in B$) the restriction of $\psi$ to $P_{a, B}$ (resp. $P_{A, b}$) is a group homomorphism,
\end{enumerate}
where $P_{a, B}$ (resp. $P_{A, b}$) denotes the fiber of $P$ over $\{a\} \times B$ (resp. $A \times \{b\}$).
\end{definition} 
\noindent Thus, a $\rho$-splitting can be seen as a bi-homomorphic map which is compatible with the natural actions of $H$. Moreover, $\psi$ induces a trivialization of the pushout of $P$ along $\rho$, hence its name. \\

The context in which these maps were classically studied is the following. Consider a field $K$ which is complete with respect to a place $v$, either archimedean or discrete, $A$ and $B$ abelian varieties over $K$, $P$ a biextension of $(A, B)$ by $\mathbb G_m$, and $\rho: K^* \to Y$ a homomorphism from the group of units of $K$ to an abelian group $Y$. A key result by Mazur and Tate \cite[p. 199]{MT83} states the existence of canonical $\rho$-splittings of the group $P(K)$ of rational points of $P$ in the following cases:
\begin{enumerate}[(i)]
\item $v$ is archimedean and $\rho(c) = 0$ for all $c$ such that $|c|_v = 1$, 
\item $v$ is discrete, $\rho$ is unramified (\textit{i.e.} $\rho(R^*) = 0$, where $R$ is the valuation ring of $K$) and $Y$ is uniquely divisible by $N$, and 
\item $v$ is discrete, the residue field of $K$ is finite, $A$ has semistable ordinary reduction and $Y$ is uniquely divisible by $M$,
\end{enumerate}
where $N$ is an integer depending on $A$ and $M$ is an integer depending on $A$ and $B$. We will mainly focus on case (iii). In this case, the $\rho$-splitting of $P(K)$ is obtained by extending a local formal splitting of $P$, which exists and is unique because of the semistable ordinary reduction of $A$. \\

When $B = A^\vee$ is the dual abelian variety of $A$ and $P = P_A$ is the Poincar\'e biextension, there is an alternate method of obtaining $\rho$-splittings of $P(K)$ starting with a splitting of the Hodge filtration of the first de Rham cohomology of $A$. This construction is due to Zarhin \cite{ZA90} and is done as follows. Let $K$ be a field which is the completion of a number field with respect to a discrete place $v$ over a prime $p$ and consider a continuous homomorphism $\rho: K^* \rightarrow \mathbb Q_p$. Recall that, associated to the first de Rham cohomology $K$-vector space of $A$, there is a canonical extension 
\begin{equation} \label{hodgefil}
0 \rightarrow \CH^0(A, \Omega_{A/K}^1) \rightarrow \DR^1(A) \rightarrow \CH^1(A, \mathcal O_A) \rightarrow 0
\end{equation}
coming from the Hodge filtration of $\DR^1(A)$.
It is known that \eqref{hodgefil} can be naturally identified with the exact sequence of Lie algebras induced by the universal vectorial extension $A^{\vee \#}$ of $A^\vee$:
\begin{equation}
0 \rightarrow \omega_A \rightarrow A^{\vee \#} \rightarrow A^\vee \rightarrow 0,
\end{equation}
where $\omega_A$ is the $K$-vector group representing the sheaf of invariant differentials on $A$ (see \cite[Prop. 4.1.7, p. 48]{MM74}). Therefore, it is possible to obtain a (uniquely determined) splitting $\eta: A^\vee(K) \rightarrow A^{\vee \#}(K)$ at the level of groups from any splitting $r: \CH^1(A, \mathcal O_A) \rightarrow \DR^1(A)$ of \eqref{hodgefil} (see \cite[Ex. 3.1.5, p. 328]{ZA90} or \cite[Lemma 3.1.1, p. 641]{CO91}). Since $A^{\vee}$ represents the functor $\sExt_K(A, \mathbb G_{m})$, while $A^{\vee \#}$ represents the functor $\sExtrig_K(A, \mathbb G_{m})$ of rigidified extensions of $A$ by $\mathbb G_m$, then the morphism $\eta$ gives a multiplicative way of associating a rigidification to every extension of $A$ by $\mathbb G_{m}$. Indeed, take a point $a^\vee \in A^\vee(K)$ and let $P_{A,a^\vee}$ be the fiber of the Poincar\'e bundle $P_A$ over $A \times \{a^\vee\}$. Then $\eta(a^\vee)$ corresponds to the extension $P_{A,a^\vee}$ of $A$ by $\mathbb G_m$ endowed with a rigidification or, equivalently, a splitting 
$$t_{a^\vee}: \Lie P_{A,a^\vee}(K) \rightarrow \Lie \mathbb G_m(K)$$
of the exact sequence of Lie algebras induced by $P_{A,a^\vee}$. The composition $\Lie \rho \circ t_{a^\vee}$ can then be extended to a group homomorphism $P_{A,a^\vee}(K) \to \mathbb Q_p$ (see \cite[Thm. 3.1.7, p. 329]{ZA90}), for every $a^\vee \in A^\vee$, thus obtaining a $\rho$-splitting 
$$\psi_\rho: P_{A}(K) \rightarrow \mathbb Q_p.$$

When $\rho$ is unramified, $\psi_\rho$ does not depend on the choice of splitting of \eqref{hodgefil}, recovering Mazur and Tate's result for case (ii) (see \cite[Thm. 4.1, p. 331]{ZA90}). On the other hand, when $\rho$ is ramified, $\psi_\rho$ does depend on the chosen splitting of \eqref{hodgefil} (see \cite[Thm. 4.3, p. 333]{ZA90}). Coleman \cite{CO91} demonstrated that, when $A$ has good ordinary reduction, the canonical $\rho$-splitting of $P_A(K)$ constructed by Mazur and Tate comes from the splitting of \eqref{hodgefil} induced by the unit root subspace, which is the subspace of $\DR^1(A)$ on which the Frobenius acts with slope 0. Later, Iovita and Werner \cite{IW03} were able to generalize this result to abelian varieties with semistable ordinary reduction by considering their Raynaud extension, which can be seen as a 1-motive whose abelian part has good ordinary reduction (see also \cite{WE98}).


\subsection{1-motives}

According to Deligne \cite[p. 59]{DE74}, a \emph{1-motive} $M$ over a field $K$ consists of:
\begin{enumerate}[(i)]
\item a \emph{lattice} $L$ over $K$, \textit{i.e.} a group scheme which, locally for the \'etale topology on $K$, is isomorphic to a finitely generated free abelian constant group;
\item a \emph{semi-abelian variety} $G$ over $K$, \textit{i.e.} an extension of an abelian variety $A$ by a torus $T$; and
\item a morphism of $K$-group schemes $u: L \rightarrow G$.
\end{enumerate}
A 1-motive can be considered as a complex of $K$-group schemes with the lattice in degree -1 and the semi-abelian in degree 0. A \emph{morphism of 1-motives} can then be defined as a morphism of the corresponding complexes.

\subsubsection{Cartier duality}
Associated to a 1-motive $M$ there is a \emph{Cartier dual 1-motive} $M^\vee = [L^\vee \xrightarrow{u^\vee} G^\vee]$ defined as follows (see \cite[p. 67]{DE74}). The lattice $L^\vee := \sHom_K(T, \mathbb G_m)$ is the Cartier dual of $T$, the torus $T^\vee := \sHom_K(L, \mathbb G_m)$ is the Cartier dual of $L$, the abelian variety $A^\vee$ is the dual abelian variety of $A$, and the semi-abelian variety $G^\vee$ is the image of $v: L \xrightarrow{u} G \to A$ under the natural isomorphism
$$\Hom_K(L, A) \xrightarrow{\cong} \Ext_K^1(A^\vee, T^\vee).$$
There is a canonical biextension $P$ of $(M, M^\vee)$ by $\mathbb G_m$, called the \emph{Poincar\'e biextension}, expressing the duality between $M$ and $M^\vee$. It is defined as the pullback to $G \times G^\vee$ of the Poincar\'e biextension $P_A$ of $(A, A^\vee)$. $P$ is naturally endowed with trivializations over $L \times G^\vee$ and $G \times L^\vee$ that coincide over $L \times L^\vee$, making it a biextension of $(M, M^\vee)$ by $\mathbb G_m$ (see \cite[p. 60]{DE74}). Using the fact that the group scheme $G^\vee$ represents the sheaf $\sExt_K([L \xrightarrow{v} A], \mathbb G_m)$, it is possible to define the map $u^\vee: L^\vee \to G^\vee$ as
\begin{align*}
u^\vee: \sHom_K(T, \mathbb G_m) & \to \sExt_K([L \xrightarrow{v} A], \mathbb G_m) \\
\chi & \mapsto [L \xrightarrow{\xi} P_{A, v^\vee(x^\vee)}],
\end{align*}
where $x^\vee \in L^\vee$ is the element corresponding to $\chi \in \sHom_K(T, \mathbb G_m)$ and $\xi$ is obtained from the trivialization of $P$ over $L \times L^\vee$. \\

\subsubsection{de Rham realization} \label{sec:derham}
A 1-motive is endowed with a de Rham realization defined via its universal vectorial extension (see \cite[p. 58]{DE74}). The \emph{universal vectorial extension} of a 1-motive $M = [L \xrightarrow{u} G]$ over $K$ is a two term complex of $K$-group schemes 
$$M^\natural = [L \xrightarrow{u^\natural} G^\natural]$$ 
which is an extension of $M$ by the $K$-vector group $\omega_{G^\vee}$ of invariant differentials on $G^\vee$
\begin{equation} \label{def:UVE}
\xymatrix{
0 \ar[r] & 0 \ar[r] \ar[d] & L \ar@{=}[r] \ar[d]^{u^\natural} & L \ar[r] \ar[d]^{u} & 0 \\
0 \ar[r] & \omega_{G^\vee} \ar[r] & G^\natural \ar[r] & G \ar[r] & 0}
\end{equation}
and satisfies the following universal property: for all $K$-vector groups $V$, the map
$$\Hom_{\mathcal O_K}(\omega_{G^\vee}, V) \rightarrow \Ext^1_K(M, V),$$
which sends a morphism $\omega_{G^\vee} \to V$ of vector groups to the extension of $M$ by $V$ induced by pushout, is an isomorphism.
It is well known that the universal vectorial extension of a 1-motive always exists. The \emph{de Rham realization} of $M$ is then defined as
\[\TDR(M) = \Lie G^\natural.\]
This is endowed with a \emph{Hodge filtration}, defined as follows:
\[ F^i \TDR(M) = \left\{
\begin{array}{ll}
\TDR(M) & \mbox{if $i \leq -1$,}\\
\omega_{G^\vee} & \mbox{if $i = 0$,}\\
0 & \mbox{if $i \geq 1$.}
\end{array} \right. \]
We mention some properties concerning universal vectorial extensions of subquotients of $M$.

\begin{lemma} \label{lem:UVE}
\begin{enumerate}[(i)]
\item The group scheme $G^\natural$ represents the fppf-sheaf
$$S \mapsto 
\left\{
\begin{array}{c|c}
(g, \nabla) & \text{$g \in G(S)$ and $\nabla$ is a $\natural-$structure on the extension} \\
& \text{$[L^\vee \to P_{g, G^\vee}]$ of $M^\vee$ by $\mathbb G_{m}$ induced by $g$}
\end{array}
\right\}.$$
\item If we regard the semi-abelian variety $G$ as the 1-motive $G[0] = [0 \to G]$, then its universal vectorial extension is a group scheme $G^\#$ which is an extension of $G$ by the vector group $\omega_{A^\vee}$. Moreover, $G^\#$ represents the fppf-sheaf
$$S \mapsto 
\left\{
\begin{array}{c|c}
(g, \nabla) & \text{$g \in G(S)$ and $\nabla$ is a $\natural-$structure on the extension} \\
& \text{of $[L^\vee \xrightarrow{v^\vee} A^\vee]$ by $\mathbb G_{m}$ associated to $g$}
\end{array}
\right\}.$$
\item If we regard the abelian variety $A$ as the 1-motive $A[0] = [0 \to A]$, then its universal vectorial extension is a group scheme $A^\#$ which is an extension of $A$ by the vector group $\omega_{A^\vee}$. Moreover, $A^\#$ represents the fppf-sheaf
$$S \mapsto 
\left\{
\begin{array}{c|c}
(a, \nabla) & \text{$a \in A(S)$ and $\nabla$ is a $\natural-$structure on} \\
& \text{the extension $P_{a, A^\vee}$ of $A^\vee$ by $\mathbb G_{m}$}
\end{array}
\right\}.$$
\item If we regard the lattice $L$ as the 1-motive $L[1] = [L \to 0]$, then its universal vectorial extension is the complex $[L \to \omega_{T^\vee}]$. Via the identifications $L = \sHom_K(T^\vee, \mathbb G_m)$ and $\omega_{T^\vee} = \sHom_{\mathcal O_K}(\Lie T^\vee, \mathcal O_K)$, this map is described as
\begin{align*}
\sHom_K(T^\vee, \mathbb G_m) & \to  \sHom_{\mathcal O_K}(\Lie T^\vee, \mathcal O_K) \\
\chi & \mapsto \Lie \chi .
\end{align*}
\end{enumerate}
\end{lemma}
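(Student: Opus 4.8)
The plan is to establish (i) directly and to read off (ii), (iii), (iv) as the cases $M = G[0]$, $M = A[0]$ and $M = L[1]$ of it. A short computation of Cartier duals gives $(G[0])^\vee = [L^\vee \xrightarrow{v^\vee} A^\vee]$, $(A[0])^\vee = A^\vee[0]$ and $(L[1])^\vee = T^\vee[0]$, so that the vector group written $\omega_{G^\vee}$ in \eqref{def:UVE} becomes $\omega_{A^\vee}$ in the first two cases and $\omega_{T^\vee}$ in the third; this already accounts for the stated shape of the universal vectorial extensions. Moreover the Poincar\'e biextension attached to a 1-motive of the form $N[0]$ is the evident pullback of $P_A$, and for $L[1]$ the semi-abelian part of the dual is $T^\vee$, so in that case (i) becomes a statement about $\natural$-structures on the trivial $\mathbb G_m$-extension of $T^\vee$. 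Hence the moduli descriptions in (ii)--(iv) are exactly what (i) produces after these substitutions, the only extra content being the explicit form of $u^\natural$ in (iv), which I treat at the end.

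For (i), let $\mathcal F$ denote the fppf-sheaf on the right-hand side, together with its forgetful morphism $\mathcal F \to G$, $(g,\nabla)\mapsto g$. A section of $G^\natural$ over $S$ lies over a well-defined $g\in G(S)$, and by Deligne's construction of $M^\natural$ from the Poincar\'e biextension \cite{DE74} (equivalently, via \eqref{def:UVE} and Cartier duality $(M^\vee)^\vee = M$) the remaining datum it carries is precisely a $\natural$-structure on the restriction of $P$ to $\{g\}\times M^\vee$; and that restriction is nothing but the extension $[L^\vee \to P_{g,G^\vee}]$ of $M^\vee$ by $\mathbb G_m$, the arrow $L^\vee \to P_{g,G^\vee}$ being the one induced by the trivialization of $P$ over $G\times L^\vee$ that was used to define $u^\vee$. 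This produces a morphism of $G$-schemes $G^\natural \to \mathcal F$. To see it is an isomorphism I would compare torsor structures over $G$: the exact sequence \eqref{def:UVE} exhibits $G^\natural\to G$ as a torsor under $\omega_{G^\vee}$, while the fibre of $\mathcal F\to G$ over a fixed $g$ is a torsor under the group of invariant differential forms on $G^\vee$ — since any two $\natural$-structures on a fixed $\mathbb G_m$-extension of $G^\vee$ differ by such a form — and this group is by definition $\omega_{G^\vee}$. Unwinding the pushout description of $M^\natural$ shows the morphism is equivariant for the two $\omega_{G^\vee}$-actions; being a morphism of torsors over $G$ under the same group, it is an isomorphism.

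For the explicit description of $u^\natural$ in (iv): here $G = 0$, so $M^\natural = [L\xrightarrow{u^\natural}\omega_{T^\vee}]$, and by Deligne's construction $u^\natural$ sends a character $\chi\in L = \sHom_K(T^\vee,\mathbb G_m)$ to the invariant differential on $T^\vee$ measuring the difference between the canonical $\natural$-structure on the split extension $\mathbb G_m\times T^\vee$ and the $\natural$-structure for which the section $t\mapsto(\chi(t),t)$ is horizontal. This difference is $d\!\log\chi = \chi^*(d\!\log)$, which under the identification $\omega_{T^\vee} = \sHom_{\mathcal O_K}(\Lie T^\vee,\mathcal O_K)$ is exactly $\Lie\chi$.

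The step I expect to cost the most work is the careful matching of conventions around $\natural$-structures on extensions of the \emph{complex} $M^\vee$ rather than of a single group scheme: one must make precise what a $\natural$-structure on $[L^\vee\to P_{g,G^\vee}]$ is, check that the lift $L^\vee\to P_{g,G^\vee}$ imposes no hidden compatibility (it is automatically horizontal, since $L^\vee$ is \'etale over the base), verify that such $\natural$-structures exist fppf-locally, and confirm that the $\omega_{G^\vee}$-torsor structure produced by Deligne's construction coincides with the one dictated by \eqref{def:UVE}. Once this local analysis is in place, (i) — and with it (ii)--(iv) — follows formally; the sub-case $L = T = 0$ reproduces the classical statement of Mazur and Messing \cite{MM74}, which serves as a consistency check.
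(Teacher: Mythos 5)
Your route is genuinely different from the paper's: the paper proves each part by citation — Prop.~3.8 and Lemma~5.2 of \cite{BE09} for (i) and (ii), Prop.~2.6.7 and Prop.~3.2.3(a) of \cite{MM74} for (iii), and Lemma~2.2.2 of \cite{AB05} for (iv), with the observation that $L \otimes_{\mathbb Z} \mathbb G_a \cong \omega_{T^\vee}$ via $x \otimes 1 \mapsto \Lie\chi$ — whereas you prove (i) once for an arbitrary 1-motive and then obtain (ii), (iii), (iv) by specializing to $G[0]$, $A[0]$, $L[1]$. Your computation of the Cartier duals $(G[0])^\vee = [L^\vee \xrightarrow{v^\vee} A^\vee]$, $(A[0])^\vee = A^\vee[0]$, $(L[1])^\vee = T^\vee[0]$ is correct, and so is the observation that this already forces the vector group in \eqref{def:UVE} to be $\omega_{A^\vee}$, $\omega_{A^\vee}$, $\omega_{T^\vee}$ in the three cases; the derivation of the moduli descriptions by substitution, and the $d\log\chi = \Lie\chi$ computation for $u^\natural$ in (iv), are also what one wants. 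This unification is cleaner than the paper's patchwork of references, and it makes the logical dependence explicit.

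The caveat is that all the weight now rests on (i), and the argument you give there is an outline rather than a proof: the construction of the comparison morphism $G^\natural \to \mathcal F$ is justified by an appeal to ``Deligne's construction,'' but the statement that a section of $G^\natural$ carries exactly the data $(g,\nabla)$ is precisely what Prop.~3.8 of \cite{BE09} establishes, so as written the step is circular. The torsor comparison is the right mechanism — both sides are $\omega_{G^\vee}$-torsors over $G$ and an equivariant map of torsors is an isomorphism — but to run it you must independently (a) produce the map from the canonical $\natural$-structure on $P^\natural$ (the paper's Prop.~3.9 citation) by descending from $G^{\vee\natural}$ to $G^\vee$, (b) pin down what a $\natural$-structure on an extension of the two-term complex $M^\vee$ means and verify, as you say, that the \'etaleness of $L^\vee$ makes horizontality of the lift automatic, and (c) show fppf-local existence of $\natural$-structures so that $\mathcal F \to G$ is a torsor and not merely a pseudo-torsor. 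You flag exactly these points yourself, so there is no error of reasoning; but they constitute the actual content of Bertapelle's proposition, so the work deferred is essentially the same work the paper defers by citing. As a consistency check your observation that $L=T=0$ recovers Mazur--Messing is exactly right.
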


\begin{proof}
Parts (i) and (ii) follow from Proposition 3.8 and Lemma 5.2 in \cite{BE09}, respectively. Part (iii) follows from Proposition 2.6.7 and Proposition 3.2.3 (a) in \cite{MM74} (see also \cite[Thm. 0.3.1, p. 633]{CO91}). And, finally, (iv) follows from Lemma 2.2.2 in \cite{AB05}, once we notice that there is a natural isomorphism $L \otimes_{\mathbb Z} \mathbb G_a \cong \omega_{T^\vee}$ mapping $x \otimes 1 \mapsto \Lie \chi$.
\end{proof}

Let $P^\natural$ be the biextension of $(M^\natural, M^{\vee \natural})$ by $\mathbb G_m$ obtained from $P$ by pullback. There is a canonical connection $\nabla$ on $P^\natural$ which endows it with a $\natural-$structure (see \cite[Prop. 3.9, p. 1644]{BE09}). Its curvature is an invariant 2-form on $G^\natural \times G^{\vee \natural}$ and therefore it determines an alternating pairing $R$ on $\Lie G^\natural \times \Lie G^{\vee \natural}$ with values in $\Lie \mathbb G_{m}$. Since the restriction of $R$ to $\Lie G^\natural$ and $\Lie G^{\vee \natural}$ is zero, this map induces a pairing
\[\Phi: \Lie G^\natural \times \Lie G^{\vee \natural} \to \Lie \mathbb G_{m}. \]
\emph{Deligne's pairing} is then defined as          
\[(\us, \us)^{Del}_M :=- \Phi: \TDR(M) \times \TDR(M^\vee) \to \Lie \mathbb G_{m}. \]

\subsubsection{Albanese and Picard 1-motives} \label{sec:alb_pic}
Let $C_0$ be a curve over a field $K$ of characteristic 0, \textit{i.e.} a purely 1-dimensional variety \footnote{Originally, Deligne considered only algebraically closed fields, but these constructions can also be done over an arbitrary field of characteristic 0 (see \cite[p. 87--90]{BS01}).}. Consider the following commutative diagram
\[\begin{tikzcd}
C'  \ar[d, twoheadrightarrow, "\pi"'] \ar[r, hook, "j'"] \ar[dd, bend right=50, "q"'] & \bar C' \ar[d, twoheadrightarrow, "\bar \pi"] \\
C \ar[r, hook, "j"] \ar[d, twoheadrightarrow, "\pi_0"'] & \bar C \\
C_0 &
\end{tikzcd}\]
where $C'$ is the normalization of $C_0$, $\bar C'$ is a smooth compactification of $C'$, and $\bar C$ (resp. $C$) is the curve obtained from $\bar C'$ (resp. $C'$) by contracting each of the finite sets $q^{-1}(x)$, for $x \in C_0$. Notice that $\bar C$ is projective and $C$ is semi-normal. Let $S$ be the set of singular points of $C$, $S' := \pi^{-1}(S)$, and $F := \bar C' - C' = \bar C - C$. \\

The \emph{cohomological Albanese 1-motive of $C_0$} is defined as
\[\Alb^+(C_0) = [u_{\mathrm{Alb}}: \Div^0_F(\bar C') \to \Pic^0(\bar C)], \]
where :
\begin{enumerate}[(i)]
\item $\Pic^0(\bar C)$ denotes the group of isomorphism classes of invertible sheaves on $\bar C$ which are algebraically equivalent to 0. This is a semi-abelian variety: the map $\bar \pi^*: \Pic^0(\bar C) \to \Pic^0(\bar C')$ is surjective and its kernel is a torus.
\item $\Div^0_F(\bar C')$ denotes the group of (Cartier) divisors $D$ on $\bar C'$ such that $\supp D \subset F$ and $\mathcal O(D) \in \Pic^0(\bar C')$. 
\item $u_{\mathrm{Alb}}$ is the map $D \mapsto \mathcal O(D)$ associating a divisor $D$ to the corresponding invertible sheaf $\mathcal O(D)$.
\end{enumerate}

The \emph{homological Picard 1-motive of $C_0$} is defined as
\[\Pic^-(C_0) = [u_{\mathrm{Pic}}: \Div^0_{S'/S}(\bar C', F) \to \Pic^0(\bar C', F)], \]
where :
\begin{enumerate}[(i)]
\item $\Pic^0(\bar C', F)$ denotes the group of isomorphism classes of pairs $(\mathcal L, \phi)$, where $\mathcal L$ is an invertible sheaf on $\bar C'$ algebraically equivalent to 0 and $\phi: \mathcal L|_{F} \to \mathcal O_F$ is a trivialization over $F$. This is a semi-abelian variety: the natural map $\Pic^0(\bar C', F) \to \Pic^0(\bar C')$ is surjective and its kernel is a torus.
\item $\Div^0_{S'/S}(\bar C', F)$ denotes the group of (Cartier) divisors $D$ on $\bar C'$ which belong to the kernel of $\bar \pi_*: \Div_{S'}(\bar C') \to \Div_S(\bar C)$ and satisfy that $\mathcal O(D) \in \Pic^0(\bar C', F)$.
\item $u_{\mathrm{Pic}}$ is the map $D \mapsto \mathcal O(D)$ associating a divisor $D$ to the corresponding invertible sheaf $\mathcal O(D)$.
\end{enumerate}

An important fact is that the dual of $\Pic^-(C_0)$ is $\Alb^+(C_0)$ and viceversa.


\section{Linearizations of biextensions} \label{sec:linearization}


In this section, we consider commutative group schemes over a field $K$. We give the following definition.

\begin{definition} \label{def:linearization}
Let  $C = [A \xrightarrow{u} B], C' = [A' \xrightarrow{u'} B']$ be complexes of commutative group schemes over $K$. Let
\begin{align*}
\sigma: A \times B & \to B \\
(a, b) & \mapsto u(a) + b
\end{align*}
be the $A$-action on $B$ induced by $u$, and define $\sigma': A'  \times B'$ analogously. Let $P$ be a biextension of $(B, B')$ by $\mathbb G_m$. We define an \emph{$A \times A'$-linearization} of $P$ as an $A \times A'$-action on $P$
\[\Sigma: (A \times A') \times P \to P\]
satisfying the following conditions:
\begin{enumerate}[(i)]
    \item \emph{$\mathbb G_{m}$-equivariance}: For $a \in A$, $a' \in A'$, $c \in \mathbb G_{m}$ and $x \in P$,
    \[\Sigma(a, a', c + x) = c + \Sigma(a, a', x).\]
    \item \emph{Compatibility with $\sigma$ and $\sigma'$}: For $a \in A$ and $a' \in A'$, if $x \in P$ lies above $(b, b') \in B \times B'$ then $\Sigma(a, a', x)$ lies above $(\sigma(a, b), \sigma'(a', b'))$.
    \item \emph{Compatibility with the partial group structures of $P$}: For $a \in A$, $a'_1, a'_2 \in A'$ and $x_1, x_2 \in P$ lying above $b \in B$,
    \[\Sigma(a, a_1' + a_2', x_1 +_1 x_2) = \Sigma(a, a'_1, x_1) +_1 \Sigma(a, a'_2, x_2),\]
    and for $a_1, a_2 \in A$, $a' \in A'$ and $x_1, x_2 \in P$ lying above $b' \in B'$,
    \[\Sigma(a_1 + a_2, a', x_1 +_2 x_2) = \Sigma(a_1, a', x_1) +_2 \Sigma(a_2, a', x_2).\]
\end{enumerate}
\end{definition}

\begin{remark}
An action $\Sigma: (A \times A') \times P \to P$ satisfying conditions (i) and (ii) is an $A \times A'$-linearization of the line bundle $P$ in the sense of Definition 1.6 in \cite[p. 30]{MF94}; this can be summed up as saying that $\Sigma$ is a ``bundle action'' lifting the actions $\sigma$ and $\sigma'$. Notice that $\sigma$ and $\sigma'$ are homomorphisms, and so condition (iii) may then be interpreted as a lifting to $P$ of the compatibility of $\sigma$ and $\sigma'$ with the group structures of $B$ and $B'$. In the rest of the article, we will only use the term \textit{linearization} in the sense of Definition \ref{def:linearization} above.
\end{remark}

\begin{remark}
By considering constant group schemes, we are also able to talk about linearizations of biextensions of abelian groups.
\end{remark}

Let $C = [A \xrightarrow{u} B], C' = [A' \xrightarrow{u'} B']$ be as in Definition \ref{def:linearization} and consider a biextension $P$ of $(B, B')$ by $\mathbb G_{m}$. Given a biextension structure of $(C, C')$ by $\mathbb G_{m}$ on $P$ with trivializations
$$\tau: A \times B' \to P, \quad \tau': B \times A' \to P$$
we can define an $A \times A'$-linearization of $P$ as
\begin{align*}
\Sigma: (A \times A') \times P & \to P \\
(a,a',x) & \mapsto [\tau'(u(a), a') +_2 \tau'(b, a')] +_1 [\tau(a, b') +_2 x],
\end{align*}
where $x \in P$ lies above $(b, b') \in B \times B'$.
This construction is due to \cite[Thm. 6.8, p. 688]{BL91} (see also \cite[p. 306]{WE98}).
Conversely, given an $A \times A'$-linearization
\[\Sigma: (A \times A') \times P \to P\]
of $P$, we can define a biextension structure of $(C, C')$ by $\mathbb G_{m}$ on $P$ as the one determined by the trivializations
\begin{align*}
    \tau: A \times B' & \to P \\
    (a, b') & \mapsto \Sigma(a, 0, 0_{b'})
\end{align*}
\begin{align*}
    \tau': B \times A' & \to P \\
    (b, a') & \mapsto \Sigma(0, a', 0_b),
\end{align*}
where $0_b, 0_{b'}$ are the zero elements in the groups $(P_{b, B'}, +_1), (P_{B, b'}, +_2)$, respectively. These constructions are inverses of each other. \\

\begin{proposition} \label{pro:quotient_biext}
Let $C, C'$ an $P$ be as in Definition \ref{def:linearization} and suppose that $u(K)$ and $u'(K)$ are injective. Then an $A \times A'$-linearization $\Sigma$ of $P$ induces a quotient biextension $Q(K)$ of $(B(K)/A(K), B'(K)/A'(K))$ by $K^*$. 
\end{proposition}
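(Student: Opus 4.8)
The strategy is to use the linearization $\Sigma$ to descend the biextension $P(K)$ along the quotient maps $B(K) \to B(K)/A(K)$ and $B'(K) \to B'(K)/A'(K)$, and then verify the biextension axioms for the resulting object. First I would observe that the $A(K) \times A'(K)$-action on $P(K)$ given by $\Sigma$ covers, via condition (ii) of Definition \ref{def:linearization}, the translation action of $A(K) \times A'(K)$ on $B(K) \times B'(K)$ through $(\sigma, \sigma')$; since $u(K)$ and $u'(K)$ are injective, this translation action is free, and hence the action on the $K^*$-torsor $P(K)$ is free as well. Define $Q(K) := P(K)/(A(K) \times A'(K))$ as the set of orbits; the $\mathbb G_m$-equivariance (condition (i)) ensures the residual $K^*$-action on $Q(K)$ descends and is still free and transitive on fibers, so $Q(K)$ is naturally a $K^*$-torsor over the image of $B(K) \times B'(K)$ in $(B(K)/A(K)) \times (B'(K)/A'(K))$, which is everything.

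Next I would equip $Q(K)$ with its two partial group structures. Given two elements of $Q(K)$ lying over the same point of $B(K)/A(K)$, I would lift them to elements $x_1, x_2 \in P(K)$ lying over points $b_1, b_2 \in B(K)$ with $b_2 = \sigma(a, b_1)$ for some $a \in A(K)$; after translating $x_2$ by $\Sigma(-a, 0, -)$ I may assume $b_1 = b_2$, form $x_1 +_1 x_2$ using the partial group law of $P$, and take its class in $Q(K)$. Condition (iii) of Definition \ref{def:linearization} — specifically the first displayed identity $\Sigma(a, a_1' + a_2', x_1 +_1 x_2) = \Sigma(a, a_1', x_1) +_1 \Sigma(a, a_2', x_2)$ — is exactly what guarantees that this class is independent of the choices of lifts and of the translating elements, so $+_1$ is well-defined on $Q(K)$; the second identity does the same for $+_2$. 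The compatibility of $+_1$ and $+_2$ (the biextension pentagon/compatibility axiom) and their $K^*$-equivariance are then inherited directly from $P(K)$, since all the relevant identities hold before passing to orbits and the quotient map is compatible with $+_1$, $+_2$ and the $K^*$-action.

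Finally I would check that $Q(K)$ really is a biextension in the sense required, i.e. that restricted to a fiber over a fixed element of one quotient it is a commutative extension of the other quotient by $K^*$: this is again inherited from the corresponding property of $P(K)$ together with the well-definedness just established, and the trivializations of $Q(K)$ over the relevant sub-loci (coming, as in the discussion preceding the proposition, from $\Sigma$ evaluated at zero sections) make the biextension structure explicit if one wants it.

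\textbf{Main obstacle.} The technical heart is the verification that the partial group laws descend: one must carefully track how condition (iii) interacts with the freeness of the $A(K) \times A'(K)$-action and with the two different fibrations of $P$, checking that every ambiguity in the choice of lift is absorbed by precisely one instance of a condition-(iii) identity. Everything else is a routine transport of structure through the quotient, using injectivity of $u(K)$ and $u'(K)$ only to know the action is free (so that orbits are the right size and the quotient torsor is well-behaved).
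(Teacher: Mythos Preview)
Your proposal is correct and follows essentially the same approach as the paper: define $Q(K)$ as the set of $\Sigma$-orbits in $P(K)$, observe that the $K^*$-action descends and is free and transitive on fibers, and define the partial sum $[x_1] +_1 [x_2]$ by translating one lift via $\Sigma(a,0,-)$ so that both lie over the same element of $B(K)$, then taking the class of $x_1 +_1 \Sigma(a,0,x_2)$. The paper's proof is in fact terser than yours---it simply asserts well-definedness and commutativity without spelling out how condition (iii) is used---so your more explicit invocation of the linearization identities and your remark on inheriting the biextension compatibility axiom are welcome elaborations rather than a different route.
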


\begin{proof}
Notice that $P(K)$ is a biextension of $(B(K), B'(K))$ by $K^*$ and that $\Sigma(K): (A(K)\times A'(K)) \times P(K) \to P(K)$ is an $A(K) \times A'(K)$-linearization of $P(K)$. We define $Q(K)$ as the set consisting of the orbits 
\[[x] := \{\Sigma(a, a', x) | a \in A(K), a' \in A'(K)\}\]
of elements $x \in P(K)$ under $\Sigma$. Then $Q(K)$ maps surjectively onto $B(K)/A(K) \times B'(K)/A'(K)$ and is endowed with a $K^*$-action which is free and transitive on fibers. To see that it is a biextension it is then enough to prove that $+_1$ and $+_2$ induce partial group structures on $Q(K)$. For this, take elements $x_1, x_2 \in P(K)$ lying above $(b_1, b_1'), (b_2, b_2') \in B(K) \times B'(K)$, respectively, such that the orbits of $b_1$ and $b_2$ under $\sigma$ are equal. This is equivalent to having
$$b_1 = \sigma(a, b_2),$$
for some (unique) $a \in A(K)$. Then $x_1$ and $\Sigma(a, 0, x_2)$ project to $b_1 \in B(K)$ and we are able to define
\[[x_1] +_1 [x_2] := [x_1 +_1 \Sigma(a, 0, x_2)].\]
This is well defined and commutative. We define the partial group structure $+_2$ analogously. \\
\end{proof}

Consider a pair of 1-motives $M = [L \xrightarrow{u} G]$, $M' = [L' \xrightarrow{u'} G']$ and a biextension $P$ of $(M, M')$ by $\mathbb G_m$. For our purposes, we give the following 
\begin{definition}
The group of \emph{$K$-points of} $M$ over $K$ as 
$$M(K) := \Ext^1_K(M^\vee, \mathbb G_{m}).$$
\end{definition}
\noindent This is inspired by \cite[p. 326]{DE79}. Consider the short exact sequence of complexes
\[\xymatrix{
0 \ar[r] & 0 \ar[r] \ar[d] & L^\vee \ar@{=}[r] \ar[d]^{u^\vee} & L^\vee \ar[r] \ar[d]^{v^\vee} & 0 \\
0 \ar[r] & T^\vee \ar[r] & G^\vee \ar[r] & A^\vee \ar[r] & 0
}\]
and the long exact sequence of abelian groups that it induces
\[\ldots \to L(K) \xrightarrow{u(K)} G(K) \to M(K) \to \Ext_K^1(T^\vee, \mathbb G_{m}) \to \ldots.\]
It follow that, when $T^\vee$ is split (or, equivalently, when $L$ is constant), the group of $K$-points of M is
\begin{equation}
M(K) = G(K)/\Ima(u(K)).
\end{equation}

If $L$, $L'$ are constant and $u(K)$, $u'(K)$ are injective then $P(K)$ induces a biextension of $(M(K), M'(K))$ by $K^*$, by Proposition \ref{pro:quotient_biext}. When $M' = M^\vee$ and $P$ is the Poincar\'e biextension, we will denote by $Q_M(K)$ the induced biextension of $(M(K), M^\vee(K))$ by $K^*$. \\

We will now introduce the concept of \emph{compatibility} between a linearization and a $\rho$-splitting of a biextension. First, we recall the following definition from \cite[p. 199]{MT83}
\begin{definition}
Let $B$, $B'$, $H$ , $Y$ be abelian groups and $P$ a biextension of $(B, B')$ by $H$. Let $\rho: H \to Y$ be a homomorphism. A \emph{$\rho$-splitting} of $P$ is a map $\psi: P \to Y$ such that
\begin{enumerate}[(i)]
\item $\psi(h+x)=\rho(h) + \psi(x)$, for all $h \in H$ and $x \in P$ and
\item for each $b \in B$ (resp. $b' \in B'$) the restriction of $\psi$ to $P_{b, B'}$ (resp. $P_{B, b'}$) is a group homomorphism.
\end{enumerate}
\end{definition} 

\begin{definition}
Let $C = [A \xrightarrow{u} B], C' = [A' \xrightarrow{u'} B']$ be complexes of commutative group schemes over $K$ and $P$ a biextension of $(C, C')$ by $\mathbb G_{m}$. Let $Y$ be an abelian group and $\rho: K^* \to Y$ a homomorphism. We will say that a $\rho$-splitting $\psi: P(K) \to Y$ of $P(K)$ is \emph{compatible} with the induced $A \times A'$-linearization $\Sigma$ of $P$ if any of the following equivalent conditions are satisfied:
\begin{enumerate}[(i)]
    \item $\psi(\Sigma(a, a', x)) = \psi(x)$,
    for all $a \in A(K)$, $a' \in A'(K)$ and $x \in P(K)$,
    \item $\psi \circ \tau$ and $\psi \circ \tau'$ vanish on $A(K) \times B'(K)$ and $B(K) \times A'(K)$, respectively.
\end{enumerate}
\end{definition}

\begin{remark}
Assuming $u(K)$ and $u'(K)$ injective, $\psi$ is compatible with an $A \times A'$-linearization if and only if it induces a $\rho$-splitting on the quotient biextension $Q(K)$, which exists by Proposition \ref{pro:quotient_biext}. \\
\end{remark}


\section{$\rho$-splittings in the ramified case} \label{sec:ramified}

Let $K$ be a finite extension of $\mathbb Q_p$ and consider a branch $\lambda: K^* \to K$ of the $p$-adic logarithm. For a commutative algebraic group $H$ over $K$, we will denote by $\lambda_H: H(K) \to \Lie H(K)$ the uniquely determined homomorphism of Lie groups extending $\lambda$ as constructed in \cite{ZA96}. Let $M = [L \xrightarrow{u} G]$ be a 1-motive over $K$ with $L$ and $T$ split, and denote $M^\vee = [L^\vee \xrightarrow{u^\vee} G^\vee]$ its dual; notice that $L^\vee$ and $T^\vee$ are also split. Let $M^\natural = [L \xrightarrow{u^\natural} G^\natural]$ and $M^{\vee \natural} = [L \xrightarrow{u^{\vee \natural}} G^{\vee \natural}]$ be their corresponding universal vectorial extensions. The group schemes described in Lemma \ref{lem:UVE} fit in the following commutative diagrams with exact rows and columns: \\
\noindent\begin{minipage}{0.55\linewidth}
\begin{equation} \label{notation_UVE1}
\begin{tikzcd}
& 0 \ar[d] & 0 \ar[d] & & \\
0 \ar[r] & \omega_{A^\vee} \ar[d] \ar[r]  & G^\# \ar[d, "\gamma"'] \ar[r, "\theta'"] & G \ar[d, equal] \ar[r] & 0 \\
0 \ar[r] & \omega_{G^\vee} \ar[d, "\varepsilon"'] \ar[r, "\zeta"] & G^\natural  \arrow[ul, phantom, "\ulcorner", very near start] \ar[d, "\sigma"'] \ar[r, "\theta"] & G \ar[r] & 0 \\
& \omega_{T^\vee} \ar[r, equal] \ar[d] & \omega_{T^\vee} \ar[d] & & \\
& 0 & 0 & & 
\end{tikzcd}
\end{equation}
\end{minipage}%
\begin{minipage}{0.5\linewidth}
\begin{equation} \label{notation_UVE2}
\begin{tikzcd}
& & 0 \ar[d] & 0 \ar[d] & \\ & & T \ar[d, "\iota^\#"'] \ar[r, equal] & T \ar[d, "\iota"] & \\
0 \ar[r] & \omega_{A^\vee} \ar[r] \ar[d, equal] & G^\# \ar[r, "\theta'"] \ar[d, "\pi^\#"'] \arrow[dr, phantom, "\lrcorner", very near start] & G \ar[r] \ar[d, "\pi"] & 0 \\
0 \ar[r] & \omega_{A^\vee} \ar[r] & A^\# \ar[r, "\theta_A"] \ar[d] & A \ar[r] \ar[d] & 0 \\
& & 0 & 0 & 
\rlap{\ .}
\end{tikzcd}
\end{equation}
\end{minipage}

We will denote the morphisms in the diagrams for $G^{\vee}$ analogously, so that $\varepsilon$ is defined by pullback along $\iota^\vee:T^\vee \to G^\vee$ and $\Lie \iota^\vee$ is dual to $\varepsilon$. \\

For the rest of this section, we fix splittings of the following exact sequences of vector group schemes over $K$:
\begin{equation} \label{es_vg1}
\begin{tikzcd}
0 \ar[r] & \omega_{A^\vee} \ar[r] & \omega_{G^\vee} \ar[r, "\varepsilon"'] & \omega_{T^\vee} \ar[r] \ar[l, dashed, bend right, "\bar \varepsilon"'] & 0
\end{tikzcd}
\end{equation}
\[\begin{tikzcd}
0 \ar[r] & \omega_A \ar[r] & \omega_G \ar[r, "\varepsilon^\vee"'] & \omega_T \ar[r] \ar[l, dashed, bend right, "\bar \varepsilon^\vee"'] & 0
\rlap{\ .}
\end{tikzcd}\]

These induce the isomorphisms:
\begin{enumerate}[(i)]
\item $\omega_G \cong \omega_A \times \omega_T$ of vector group schemes, and similarly for $\omega_{G^\vee}$.
\item $G^\natural \cong \omega_{T^\vee} \times G^\#$ of commutative group schemes induced by the section $\bar \sigma := \zeta \circ \bar \varepsilon$ of $\sigma$, and similarly for $G^{\vee \natural}$. We will denote by $\bar \gamma$ the induced retraction of $\gamma$:
\begin{equation} \label{es_Gnat}
\begin{tikzcd}
0 \ar[r] & G^\# \ar[r, "\gamma"'] & G^\natural \ar[r, "\sigma"'] \ar[l, dashed, bend right, "\bar \gamma"'] & \omega_{T^\vee} \ar[r] \ar[l, dashed, bend right, "\bar \sigma"'] & 0
\rlap{\ .}
\end{tikzcd}
\end{equation}
Notice that $\bar \gamma$ satisfies $\theta' \circ \bar \gamma = \theta$, by the universal property of the pushout. We fix the analogous notation for $G^{\vee \natural}$.
\item $\Lie G \cong \Lie A \times \Lie T$ of Lie algebras obtained from (i) by duality. We denote $j := \Lie \iota$, $q := \Lie \pi$ and let $\bar j$ be the retraction of $j$ and $\bar q$ the section of $q$ induced by this isomophism:
\begin{equation} \label{es_la}
\begin{tikzcd}
0 \ar[r] & \Lie T \ar[r, "j"'] & \Lie G \ar[r, "q"'] \ar[l, dashed, bend right, "\bar j"'] & \Lie A \ar[r] \ar[l, dashed, bend right, "\bar q"'] & 0
\rlap{\ .}
\end{tikzcd}
\end{equation}
We also fix the analogous notation for $G^{\vee \natural}$. \\
\end{enumerate}

We will continue to denote Deligne's pairing associated to $M$ and its dual as
\[(\us, \us)^{Del}_M: \TDR(M) \times \TDR(M^\vee) = \Lie G^\natural \times \Lie G^{\vee \natural} \to \mathbb G_a .\]
Deligne's pairing associated to $A$ and its dual will be denoted as
\[(\us, \us)^{Del}_A: \TDR(A) \times \TDR(A^\vee) = \Lie A^\# \times \Lie A^{\vee \#} \to \mathbb G_a. \]
We want to recognize in $(\us, \us)^{Del}_M$ the contribution of the abelian varieties and the tori. With this in mind, we also define the following pairing.
\begin{definition}
Define $T^\natural:= \omega_{T^\vee} \times T$ and $T^{\vee \natural}:= \omega_{T} \times T^\vee$. Let $\alpha_{T^\vee}$ be the invariant differential of $T^\vee$ over $\omega_{T^\vee}$ which corresponds to the identity map on $\omega_{T^\vee}$, and define $\alpha_T$ analogously. Denote by $\Phi_T$ the pairing on $\Lie T^\natural \times \Lie T^{\vee \natural}$ determined by the curvature of the invariant differential $\alpha_{T^\vee} + \alpha_T$. We define
\[(\us, \us)_T:= - \Phi_T: \Lie T^\natural \times \Lie T^{\vee \natural} \to \mathbb G_a. \]
\end{definition}

\noindent The following lemma gives an explicit description of $(\us, \us)_T$. 

\begin{lemma} \label{lem:Del_pairing_T}
Let $L \cong \mathbb Z^r$ and $T \cong \mathbb G_m^d$, so that $L^\vee \cong \mathbb Z^d$ and $T^\vee \cong \mathbb G_m^r$. Then the pairing 
\[(\us, \us)_T: \Lie T^\natural \times \Lie T^{\vee \natural} \cong (\mathbb G_a^r \times \mathbb G_a^d) \times (\mathbb G_a^d \times \mathbb G_a^r) \to \mathbb G_a \]
is given by the matrix
\[\Gamma = \phantom{(17') \left\{\vphantom{\dfrac{n}{\eta \cdot \delta}}\right.\hspace{\dimexpr\arraycolsep-\nulldelimiterspace}}
\bordermatrix{\hspace{-\arraycolsep} & 
\overbrace{\hphantom{\hspace{1.5cm}}}^{d} & 
\overbrace{\hphantom{\hspace{2.3cm}}}^{r} 
\cr
\hspace{-\arraycolsep}\mathllap{r \left\{\vphantom{\begin{matrix} -1 & & -1 \\ & \ddots& \\ -1 & & -1 \end{matrix}}\right.\kern-\nulldelimiterspace} & 
\begin{matrix} 0 & & 0 \\ & \ddots& \\ 0 & & 0 \end{matrix}
&
\begin{matrix} -1 & & \quad 0 \\ & \ddots& \\ \quad 0 & & -1 \end{matrix}   
\cr
\hspace{-\arraycolsep}\mathllap{d \left\{\vphantom{\begin{matrix} -1 & & -1 \\ & \ddots& \\ -1 & & -1 \end{matrix}}\right.\kern-\nulldelimiterspace} & 
\begin{matrix} 1 & & 0 \\ & \ddots& \\ 0 & & 1 \end{matrix}
&
\begin{matrix} \quad 0 & & \quad 0 \\ & \ddots& \\ \quad 0 & & \quad 0 \end{matrix}}.\]
\end{lemma}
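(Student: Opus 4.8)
The plan is to compute $(\us, \us)_T = -\Phi_T$ directly from the definition, tracking through the curvature of the invariant differential $\alpha_{T^\vee} + \alpha_T$ on $T^\natural \times T^{\vee \natural}$. First I would use the explicit coordinates: writing $L \cong \mathbb Z^r$, $T \cong \mathbb G_m^d$, $L^\vee \cong \mathbb Z^d$, $T^\vee \cong \mathbb G_m^r$, we have $T^\natural = \omega_{T^\vee} \times T \cong \mathbb G_a^r \times \mathbb G_m^d$ and $T^{\vee \natural} = \omega_T \times T^\vee \cong \mathbb G_a^d \times \mathbb G_m^r$. I would name coordinates, say $(s_1,\dots,s_r)$ on $\omega_{T^\vee}$, multiplicative coordinates $(t_1,\dots,t_d)$ on $T$, $(s_1^\vee,\dots,s_d^\vee)$ on $\omega_T$, and $(t_1^\vee,\dots,t_r^\vee)$ on $T^\vee$. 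The canonical invariant differential $\alpha_{T^\vee}$ corresponding to the identity of $\omega_{T^\vee}$ should, under the identification $\omega_{T^\vee} = \sHom_{\mathcal O_K}(\Lie T^\vee, \mathcal O_K)$ from Lemma \ref{lem:UVE}(iv), pair the coordinate $s_i$ with $dt_i^\vee/t_i^\vee$; symmetrically $\alpha_T$ pairs $s_j^\vee$ with $dt_j/t_j$. Concretely $\alpha_{T^\vee} = \sum_i s_i \, dt_i^\vee/t_i^\vee$ and $\alpha_T = \sum_j s_j^\vee \, dt_j/t_j$, viewed as a $1$-form on the product $T^\natural \times T^{\vee \natural}$.

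Next I would compute the curvature, which is simply $d(\alpha_{T^\vee} + \alpha_T)$ on the product, and read off the associated bilinear pairing on Lie algebras. We get
\[
d\alpha_{T^\vee} = \sum_{i=1}^r ds_i \wedge \frac{dt_i^\vee}{t_i^\vee}, \qquad d\alpha_T = \sum_{j=1}^d ds_j^\vee \wedge \frac{dt_j}{t_j}.
\]
Since $ds_i$, $dt_i^\vee/t_i^\vee$, $ds_j^\vee$, $dt_j/t_j$ form a basis of invariant $1$-forms adapted to the decomposition $\Lie T^\natural \times \Lie T^{\vee \natural} \cong (\mathbb G_a^r \times \mathbb G_a^d) \times (\mathbb G_a^d \times \mathbb G_a^r)$, the $2$-form above, restricted to $\Lie T^\natural \times \Lie T^{\vee \natural}$ (the restrictions to $\Lie T^\natural$ and to $\Lie T^{\vee \natural}$ separately vanish, as required for $\Phi_T$ to be defined), gives a pairing whose matrix has a $-1$ block pairing the $\omega_{T^\vee}$-part ($\mathbb G_a^r$, the first factor) with the $T^\vee$-part ($\mathbb G_a^r$, the last factor), coming from $ds_i \wedge dt_i^\vee/t_i^\vee$, and a $+1$ block pairing the $\omega_T$-part ($\mathbb G_a^d$, the third factor) with the $T$-part ($\mathbb G_a^d$, the second factor), coming from $ds_j^\vee \wedge dt_j/t_j$ — but with the sign reversed since for $(\us,\us)_T = -\Phi_T$ we negate, and additionally one must be careful with the order of wedge factors relative to the chosen ordering of the two arguments of the pairing. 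Matching the $2\times 2$ block structure with the first argument in $\Lie T^\natural \cong \mathbb G_a^r \times \mathbb G_a^d$ (rows, blocks of size $r$ then $d$) and the second argument in $\Lie T^{\vee \natural} \cong \mathbb G_a^d \times \mathbb G_a^r$ (columns, blocks of size $d$ then $r$) yields exactly the claimed matrix $\Gamma$: the $(r,r)$-block in the upper right is $-I_r$, the $(d,d)$-block in the lower left is $I_d$, and the other two blocks vanish.

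The main obstacle I anticipate is purely bookkeeping: getting the signs and the placement of the blocks right. There are three independent sign sources — the minus in $(\us,\us)_T = -\Phi_T$, the antisymmetry of the wedge product (so the order in which $ds_i$ versus $dt_i^\vee/t_i^\vee$ appears matters once we fix which Lie algebra is the ``first'' slot), and the convention relating the curvature $2$-form to the bilinear pairing $\Phi_T$. I would pin these down by cross-checking against Deligne's pairing $(\us,\us)^{Del}_A$ and the known compatibility (the pairing for $M$ should decompose as the abelian contribution plus this toric contribution, and the toric block for the Poincaré biextension of a split torus and its character lattice is the standard perfect pairing), and against the duality $\Pic^-(C_0)^\vee = \Alb^+(C_0)$ which forces the toric block to be $\pm$ the identity in suitable coordinates. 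Once the conventions are fixed consistently, the rest is the elementary computation of $d(\alpha_{T^\vee}+\alpha_T)$ above, and reading off the matrix is immediate.
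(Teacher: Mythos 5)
Your proof is correct and follows essentially the same route as the paper's: write explicit coordinates on $T^\natural\times T^{\vee\natural}$, express $\alpha_{T^\vee}+\alpha_T$ in those coordinates, take $d$ to get the curvature $2$-form, and read off the matrix of the induced pairing (with the extra minus from $(\us,\us)_T=-\Phi_T$). The paper does exactly this, only a bit more tersely; your additional care about sign bookkeeping and cross-checks is a sensible elaboration, not a different method.
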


\begin{proof}
In this case, the global differential $\alpha_{T^\vee} + \alpha_T$ on $T^\natural \times T^{\vee \natural} = (\mathbb G_a^r \times \mathbb G_m^d) \times (\mathbb G_a^d \times \mathbb G_m^r)$ has the expression
\[\alpha_{T^\vee} + \alpha_T = \sum_{i=1}^r x_i \frac{dt_i}{t_i} + \sum_{j=1}^d y_j \frac{dz_j}{z_j}, \]
where $x_i$ (resp. $y_j$) are the parameters of $\mathbb G_a^r$ (resp. $\mathbb G_a^d$) and $t_i$ (resp. $z_j$) are the parameters of $\mathbb G_m^r$ (resp. $\mathbb G_m^d$) (see \cite[Ex. 4.4, p. 1647]{BE09}), and its curvature is
\begin{align*}
d(\alpha_{T^\vee} + \alpha_T) & = \sum_{i=1}^r d x_i \wedge \frac{dt_i}{t_i} + \sum_{j=1}^d d y_j \wedge \frac{dz_j}{z_j} \\
& = \sum_{i=1}^r d x_i \wedge \frac{dt_i}{t_i} - \sum_{j=1}^d \frac{dz_j}{z_j} \wedge d y_j \, .
\end{align*}
From this, it is straightforward that $(\us, \us)_T$ is given by the matrix $\Gamma$.
\end{proof}

\begin{definition} \label{def:jnat+qnat}
Define
\begin{align*}
\iota^\natural & := Id \times \iota^\#: T^\natural = \omega_{T^\vee} \times T \to \omega_{T^\vee} \times G^\# \cong G^\natural, \\
\pi^\natural & := \pi^\# \circ \bar \gamma: G^\natural \to A^\#,
\end{align*}
and denote $j^\natural := \Lie \iota^\natural$ and $q^\natural := \Lie \pi^\natural$. Define $\iota^{\vee \natural}, \pi^{\vee \natural}, j^{\vee \natural}, q^{\vee \natural}$ analogously. 
\end{definition}

Notice that the following diagram commutes and the upper and lower rows are exact, which makes the middle row exact as well:
\[\begin{tikzcd}
& 0 \ar[d] & 0 \ar[d] & & \\
0 \ar[r] & T \ar[r, "{\iota^\#}"] \ar[d] & G^\# \ar[r, "{\pi^\#}"] \ar[d, "\gamma"] & A^\# \ar[r] \ar[d, equal] & 0 \\
0 \ar[r] & T^\natural \ar[r, "{\iota^\natural}"] \ar[d] & G^\natural \ar[r, "{\pi^\natural}"] \ar[d, "\sigma"] & A^\# \ar[r] & 0 \\
& \omega_{T^\vee} \ar[r, equal] \ar[d] & \omega_{T^\vee} \ar[d] & & \\
& 0 & 0 & & 
\rlap{\, .}
\end{tikzcd}\]
Therefore, $j^\natural$ and $q^\natural$ fit in a short exact sequence of Lie algebras 
\begin{equation} \label{es_lanat}
\begin{tikzcd}
0 \ar[r] & \Lie T^\natural \ar[r, "j^\natural"'] & \Lie G^\natural \ar[r, "q^\natural"'] \ar[l, dashed, bend right, "\bar j^\natural"'] & \ar[r] \Lie A^\# \ar[r] & 0
\end{tikzcd}
\end{equation}
which has a splitting $\bar j^{\natural}$ induced by $\bar j$ (see diagram \eqref{es_la}). More precisely, $\bar j^\natural$ is given by
$$\bar j^\natural := Id \times (\bar j \circ \Lie \theta'): \Lie G^\natural \cong \omega_{T^\vee} \times \Lie G^\# \to \omega_{T^\vee} \times \Lie T = \Lie T^\natural,$$
and similarly for $\bar j^{\vee \natural}$. Indeed, $\bar j^\natural$ is a splitting of \eqref{es_lanat}: 
$$\bar j^\natural \circ j^\natural = (\bar j \circ \Lie \theta') \circ j^\# = \bar j \circ j = Id.$$ 
Consider the morphisms
\[\Lie T^\natural \times \Lie T^{\vee \natural} \xleftarrow{\bar j^\natural \times \bar j^{\vee \natural}} \Lie G^\natural \times \Lie G^{\vee \natural} \xrightarrow{q^\natural \times q^{\vee \natural}} \Lie A^\# \times \Lie A^{\vee \#} .\]
We have the following
      
\begin{lemma} \label{lem:Del_pairing_M}
For all $(h, h^\vee) \in \Lie G^\natural \times \Lie G^{\vee \natural}$, the following equality holds
\[(h, h^\vee)^{Del}_M = (\bar j^\natural(h), \bar j^{\vee \natural}(h^\vee))_T + (q^\natural(h), q^{\vee \natural}(h^\vee))^{Del}_A .\]
\end{lemma}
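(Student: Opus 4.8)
The plan is to compute Deligne's pairing $(\us,\us)^{Del}_M$ by unwinding its definition via the curvature of the canonical connection $\nabla$ on $P^\natural$, and to split that curvature into a ``toric'' part and an ``abelian'' part according to the chosen splittings. Concretely, $(\us,\us)^{Del}_M = -\Phi$ where $\Phi$ is induced by the curvature $R$ of $\nabla$ on $P^\natural$, an invariant $2$-form on $G^\natural \times G^{\vee\natural}$. The key structural input is that $P^\natural$ is the pullback of $P_A^\#$ (or rather its relevant rigidified/$\natural$ version) along the morphisms $G^\natural \to G \to A$ and $G^{\vee\natural} \to G^\vee \to A^\vee$, together with the fact that the toric contributions are governed by the trivializations of $P$ over $L\times G^\vee$ and $G\times L^\vee$ coming from the lattices. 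I would therefore first identify, using Lemma \ref{lem:UVE} and the diagrams \eqref{notation_UVE1}, \eqref{notation_UVE2}, the invariant $1$-form on $G^\natural\times G^{\vee\natural}$ whose curvature defines $\Phi$, writing it via the isomorphisms $G^\natural \cong \omega_{T^\vee}\times G^\#$ and $G^{\vee\natural}\cong \omega_T\times G^{\vee\#}$.

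Next I would carry out the decomposition at the level of invariant $1$-forms. Under these product decompositions, the canonical $1$-form on $G^\natural\times G^{\vee\natural}$ should be expressible as a sum of the pullback of the canonical $1$-form on $A^\#\times A^{\vee\#}$ along $\pi^\natural\times\pi^{\vee\natural}$ (cf.\ Definition \ref{def:jnat+qnat}) and of the $1$-form $\alpha_{T^\vee}+\alpha_T$ on $T^\natural\times T^{\vee\natural}$ pulled back along $\bar j^\natural\times\bar j^{\vee\natural}$, up to exact terms that do not affect the curvature. This is essentially the assertion that the $\natural$-structure on $P^\natural$ is compatible with the filtration of $M$ by its toric and abelian parts — which is built into the constructions of \cite{BE09} that we invoked for Lemma \ref{lem:UVE}(i)--(ii). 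Taking exterior derivatives, the cross terms between the $\omega_{T^\vee}\times\omega_T$ directions and the $G^\#\times G^{\vee\#}$ directions must vanish — this is where one uses that $R$ restricted to $\Lie G^\natural$ alone (and to $\Lie G^{\vee\natural}$ alone) is zero, and the explicit shape of $\alpha_{T^\vee}+\alpha_T$ from the proof of Lemma \ref{lem:Del_pairing_T}. Evaluating the resulting $2$-form on a pair $(h,h^\vee)$ and negating gives exactly $(\bar j^\natural(h),\bar j^{\vee\natural}(h^\vee))_T + (q^\natural(h),q^{\vee\natural}(h^\vee))^{Del}_A$.

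An alternative, possibly cleaner, route is to verify the identity directly on generators: by bilinearity it suffices to check it when $h$ and $h^\vee$ range over the images of $\Lie T^\natural$, $\Lie A^\#$ (via $\bar q$-type sections) and the $\omega$-directions. On $\Lie T^\natural\times\Lie T^{\vee\natural}$ the right side reduces to $(\us,\us)_T$ by $\bar j^\natural\circ j^\natural = Id$ and $q^\natural\circ j^\natural = 0$; one checks the left side agrees using Lemma \ref{lem:Del_pairing_T} and the compatibility of the Poincaré biextension with the lattice trivializations. On the pullback of the abelian directions, $q^\natural\times q^{\vee\natural}$ is the identity after projection and $\bar j^\natural,\bar j^{\vee\natural}$ kill these classes modulo the $\omega$-part, so the right side reduces to $(\us,\us)^{Del}_A$, which matches the left side because $P^\natural$ is pulled back from $P_A^\#$. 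The mixed terms vanish because $\Phi$ annihilates $\Lie G^\natural$ and $\Lie G^{\vee\natural}$ separately, forcing $(h,h^\vee)^{Del}_M$ to depend only on the ``$\Lie T^\natural$ component of $h$'' paired with the ``$\omega$-component of $h^\vee$'' and symmetrically.

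I expect the main obstacle to be bookkeeping: pinning down precisely which invariant $1$-form on $G^\natural\times G^{\vee\natural}$ has curvature defining $\Phi$, and checking that the product decomposition of this form (induced by the fixed splittings \eqref{es_vg1}) matches the pullbacks along $\pi^\natural\times\pi^{\vee\natural}$ and $\bar j^\natural\times\bar j^{\vee\natural}$ up to an exact form. This requires keeping careful track of the dual splittings \eqref{es_la}, the retraction $\bar\gamma$ of \eqref{es_Gnat}, and the identity $\theta'\circ\bar\gamma = \theta$, and making sure the asymmetry between $\bar j^\natural$ (a retraction on the $G^\natural$ side built from $\bar j$) and $q^\natural$ (built from $\pi^\#\circ\bar\gamma$) is handled consistently on both the $M$ and $M^\vee$ sides. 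Once the $1$-form decomposition is established, differentiating and evaluating is routine and the two summands fall out as stated.
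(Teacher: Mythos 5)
Your primary strategy — decompose the canonical connection $\nabla$ on $P^\natural$ into a toric piece and a piece pulled back from $A^\#\times A^{\vee\#}$, then take curvatures — is exactly what the paper does, and you correctly flag the needed ingredients: the decomposition $Id=(\sigma,\bar\gamma)\in\omega_{T^\vee}(G^\natural)\oplus G^\#(G^\natural)$, the identity $\theta'\circ\bar\gamma=\theta$, and the role of the fixed splittings of \eqref{es_vg1}. The paper makes your ``up to exact terms'' hope precise by establishing the on-the-nose identity
\[
(P^\natural,\nabla)=\bigl(0,\ (\sigma^*\alpha_{T^\vee})\circ\bar j^\vee+(\sigma^{\vee*}\alpha_T)\circ\bar j\bigr)+\bigl(P^\natural,\ (\pi^\natural\times\pi^{\vee\natural})^*\nabla_A\bigr),
\]
from which the lemma follows by differentiating.

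There is, however, a genuine error in the justification you give for the vanishing of the cross terms, and it appears in both of your versions. You claim the mixed blocks of $\Phi$ (e.g.\ the pairing of $\Lie T^\natural\subset\Lie G^\natural$ against the abelian directions of $\Lie G^{\vee\natural}$) vanish ``because $R$ restricted to $\Lie G^\natural$ alone (and to $\Lie G^{\vee\natural}$ alone) is zero.'' That property only says $R(h_1,h_2)=0$ when \emph{both} $h_1,h_2$ lie in the same factor $\Lie G^\natural$ (resp.\ $\Lie G^{\vee\natural}$); it is precisely what lets $R$ descend to a well-defined pairing $\Phi$ between the two Lie algebras, but it carries no information about the block structure of $\Phi$ with respect to the toric/abelian decompositions of each factor. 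The actual reason the cross terms vanish is the explicit form of the decomposed connection above: the toric summand factors through $\sigma$, $\sigma^\vee$ and the linear retractions $\bar j$, $\bar j^\vee$, so its curvature pairs only the toric components, while the abelian summand is a genuine pullback along $\pi^\natural\times\pi^{\vee\natural}$, so its curvature factors through $q^\natural\times q^{\vee\natural}$. Your ``check on generators'' alternative therefore also cannot close without establishing this decomposition first — the restriction-to-summand computations you sketch are fine, but the mixed-block vanishing is not free and must come from the connection identity, not from the annihilation property of $R$. Also note a small imprecision: $\bar j^\natural$, $\bar j^{\vee\natural}$ are only Lie-algebra retractions, not group homomorphisms, so ``pulling back $\alpha_{T^\vee}+\alpha_T$ along $\bar j^\natural\times\bar j^{\vee\natural}$'' should be understood at the level of invariant forms/Lie algebras, as the paper does via $\bar\varepsilon\circ\sigma$ and duality.
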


\begin{proof}
Recall that $P^\natural$ is defined as the pullback of the Poincar\'e biextension $P$ along $\theta \times \theta^\vee: G^\natural \times G^{\vee \natural} \to G \times G^\vee$, and that $\nabla$ is determined by the sum of two differentials associated to the identities of $G^\natural$ and $G^{\vee \natural}$ (see \cite[Prop.3.9, p. 1644]{BE09}). \\

We will first describe the decomposition of the structure of $\natural$-extension over $G^\natural$ of $P^\natural$ induced by $Id \in G^\natural(G^\natural)$. The split exact sequence
\[\begin{tikzcd}[row sep=scriptsize]
0 \ar[r] & G^\# \ar[r, "\gamma"'] & G^\natural \ar[r, "\sigma"'] \ar[l, bend right, dashed, "\bar \gamma"'] & \omega_{T^\vee} \ar[r] \ar[l, bend right, dashed, "\bar \sigma"'] & 0
\end{tikzcd}\]
induces an isomorphism
\begin{align*}
G^\natural(G^\natural) & \cong \omega_{T^\vee}(G^\natural) \oplus G^\#(G^\natural) \\
Id & \mapsto (\sigma, \bar \gamma) \nonumber .
\end{align*}
By Definition \ref{def:jnat+qnat} we have $\pi^\natural = \pi^\# \circ \bar \gamma$, and so $\bar \gamma \in G^\#(G^\natural)$ and $Id \in A^\#(A^\#)$ map to the same element $\pi^\natural \in A^\#(G^\natural)$ in the diagram below:
\[\begin{tikzcd}[row sep=tiny, column sep=tiny]
G^\#(G^\natural) \ar[r, "\pi^\# \circ \_"] & A^\#(G^\natural) & A^\#(A^\#) \ar[l, "\_ \circ \pi^\natural"'] \\
\bar \gamma \ar[d, "="{sloped}, phantom] \ar[r, mapsto] & \pi^\# \circ \bar \gamma = \pi^\natural \ar[d, "="{sloped}, phantom] & Id \ar[l, mapsto] \ar[d, "="{sloped}, phantom] \\
([L^\vee_{G^\natural} \to (\pi^\natural \times Id)^*P_{A^\# \times A^\vee}], (\pi^\natural \times Id)^*\nabla_{A, 2}) & ((\pi^\natural \times Id)^*P_{A^\# \times A^\vee}, (\pi^\natural \times Id)^*\nabla_{A, 2}) & (P_{A^\# \times A^\vee}, \nabla_{A, 2})
\rlap{\ .}
\end{tikzcd}\]
Hence, if $(P_{A^\# \times A^\vee}, \nabla_{A, 2})$ is the $\natural$-extension of $A^\vee_{A^\#}$ by $\mathbb G_{m, A^\#}$ corresponding to $Id \in A^\#(A^\#)$, by Lemma \ref{lem:UVE} (iii), then $\bar \gamma$ corresponds to $([L^\vee_{G^\natural} \to (\pi^\natural \times Id)^*P_{A^\# \times A^\vee}], (\pi^\natural \times Id)^*\nabla_{A, 2})$. \\

On the other hand, the contribution of $\sigma \in \omega_{T^\vee}(G^\natural)$ is described by the trivial extension of $G^\vee_{G^\natural}$ by $\mathbb G_{m, G^\natural}$ endowed with the connection induced by the invariant differential $\bar \varepsilon \circ \sigma \in \omega_{G^\vee}(G^\natural)$ (see diagram \eqref{es_vg1} for notation). Notice that the invariant differential of $T^\vee$ over $G^\natural$ corresponding to $\sigma \in \omega_{T^\vee}(G^\natural)$ is just the pullback of $\alpha_{T^\vee}$ along $\sigma$. Now, if we consider invariant differentials as morphisms of vector groups, then $\bar \varepsilon \circ \sigma \in \omega_{G^\vee}(G^\natural)$ will correspond to $(\sigma^*\alpha_{T^\vee}) \circ \bar j^\vee$, since we had defined $\bar j^\vee$ as the morphism induced by $\bar \varepsilon$ by duality (see diagram \eqref{es_la} for notation):
\[\begin{tikzcd}[row sep=tiny]
\omega_{T^\vee}(\omega_{T^\vee}) \ar[r, "\_ \circ \sigma"]  \ar[d, "="{sloped}, phantom] & \omega_{T^\vee}(G^\natural) \ar[r, "\bar \varepsilon \circ \_"]  \ar[d, "="{sloped}, phantom] & \omega_{G^\vee}(G^\natural)  \ar[d, "="{sloped}, phantom] \\
\sHom_{\mathcal O_{\omega_{T^\vee}}}(\Lie T^\vee_{\omega_{T^\vee}}, \mathbb G_{a, \omega_{T^\vee}}) \ar[r, "\sigma^*"] & \sHom_{\mathcal O_{G^\natural}}(\Lie T^\vee_{G^\natural}, \mathbb G_{a, G^\natural}) \ar[r, "\_ \circ \bar j^\vee"] & \sHom_{\mathcal O_{G^\natural}}(\Lie G^\vee_{G^\natural}, \mathbb G_{a, G^\natural}) \\
\alpha_{T^\vee} \ar[r, mapsto] & \sigma^*\alpha_{T^\vee} \ar[r, mapsto] & (\sigma^*\alpha_{T^\vee}) \circ \bar j^\vee
\rlap{\ .}
\end{tikzcd}\]

Doing the analogous calculations for $G^{\vee \natural}$, we conclude that
$$(P^\natural, \nabla) = (0, (\sigma^*\alpha_{T^\vee}) \circ \bar j^\vee + (\sigma^{\vee *}\alpha_{T}) \circ \bar j) + (P^\natural, (\pi^\natural \times \pi^{\vee \natural})^*\nabla_{A}),$$
which gives us the desired result.
\end{proof}

\begin{definition}
Let $\eta: G(K) \to G^\natural(K)$ and $\eta^\vee: G^\vee(K) \to G^{\vee \natural}(K)$ be a pair of splittings of the exact sequences of Lie groups
\begin{gather}
0 \to \omega_{G^\vee}(K) \xrightarrow{\zeta} G^\natural(K) \xrightarrow{\theta} G(K) \to 0, \label{UVE1} \\
0 \to \omega_G(K) \xrightarrow{\zeta^\vee} G^{\vee \natural}(K) \xrightarrow{\theta^\vee} G^\vee(K) \to 0. \label{UVE2}
\end{gather}
We say that $(\eta, \eta^\vee)$, or also that $(\Lie \eta, \Lie \eta^\vee)$, are \emph{dual} with respect to Deligne's pairing $(\us, \us)^{Del}_M$ if
\[(\us, \us)^{Del}_M \circ (\Lie \eta, \Lie \eta^\vee) = 0.\]
We define \emph{dual} splittings with respect to $(\us, \us)^{Del}_A$ and $(\us, \us)_T$ analogously.
\end{definition}

For the proof of Lemma \ref{lem:etas} we will need the following result, which is a slight generalization of Lemma 3.1.1 in \cite[p. 641]{CO91}.
\begin{lemma} \label{lem:spl}
Let
\[0 \to V \to X \to Y \to 0\]
be an exact sequence of algebraic $K$-groups with $V$ a vector group. There is a bijection between splittings of the exact sequence
\begin{equation} \label{lem:spl-es1}
0 \to V(K) \to X(K) \to Y(K) \to 0
\end{equation}
and splittings of the exact sequence of Lie algebras
\begin{equation} \label{lem:spl-es2}
0 \to \Lie V(K) \to \Lie X(K) \to \Lie Y(K) \to 0.
\end{equation}
\end{lemma}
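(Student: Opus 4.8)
The plan is to construct the two maps of the asserted bijection explicitly and check they are mutually inverse. Given a splitting $s\colon Y(K)\to X(K)$ of \eqref{lem:spl-es1}, apply the $\Lie$ functor to obtain a map $\Lie s\colon \Lie Y(K)\to \Lie X(K)$; since $\Lie$ is a functor and $\Lie$ applied to the identity on $Y$ is the identity on $\Lie Y(K)$, this is a splitting of \eqref{lem:spl-es2}. Conversely, given a splitting $t\colon \Lie Y(K)\to \Lie X(K)$ of \eqref{lem:spl-es2}, I would use it to build a section at the level of points. The standard way (following Coleman's Lemma 3.1.1) is to pull back the extension: a splitting $t$ of the Lie-algebra sequence is the same as a retraction $r\colon \Lie X(K)\to \Lie V(K)$, which, because $V$ is a \emph{vector} group, exponentiates/integrates to an actual morphism of algebraic groups $X\to V$ off the identity component issues — more precisely, one uses that $\operatorname{Hom}(X,V)=\operatorname{Hom}(\Lie X,\Lie V)$ for $V$ a vector group (this is the defining universal-type property already invoked for $\omega_{G^\vee}$ in \eqref{def:UVE} and used throughout Section~\ref{sec:ramified}), so $r$ comes from a unique homomorphism $R\colon X\to V$ of $K$-groups; its kernel, or rather the induced map, produces a section $s\colon Y(K)\to X(K)$ of \eqref{lem:spl-es1} with $\Lie s = t$. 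One then checks the two assignments $s\mapsto \Lie s$ and $t\mapsto s$ are inverse to each other, which reduces to the faithfulness statement: a homomorphism $X(K)\to V(K)$ is determined by the induced Lie-algebra map, again because $V$ is a vector group over a field of characteristic $0$.

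Concretely I would organize it as follows. First, reduce to the case where the sequence splits at all (if \eqref{lem:spl-es2} has no splitting there is nothing to prove; the content is the bijection when both sides are nonempty), and fix one splitting on each side to turn both splitting-sets into torsors. Second, observe that the difference of two splittings of \eqref{lem:spl-es1} is a homomorphism $Y(K)\to V(K)$, and the difference of two splittings of \eqref{lem:spl-es2} is a homomorphism $\Lie Y(K)\to \Lie V(K)$; so it suffices to show $\Lie$ induces a bijection between $\operatorname{Hom}_{\text{grp}}(Y(K),V(K))$-type objects and $\operatorname{Hom}(\Lie Y(K),\Lie V(K))$. Third, identify these difference-homomorphisms with actual morphisms of algebraic groups: every homomorphism of $K$-points that arises as the difference of two algebraic splittings is itself algebraic (it factors through the universal vectorial extension / uses that $\operatorname{Ext}$ and $\operatorname{Hom}$ into a vector group are computed by Lie algebras). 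Fourth, invoke $\operatorname{Hom}_{K\text{-grp}}(Y,V)\xrightarrow{\ \sim\ }\operatorname{Hom}_K(\Lie Y,\Lie V)$ for $V$ a vector group, which gives the bijection on differences and hence on the torsors themselves.

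The main obstacle — and the place where the hypothesis "$V$ a vector group" and "char $K = 0$" are really used — is the passage from a morphism on $K$-points to a morphism of algebraic groups, and the faithfulness of $\Lie$ on such morphisms. For a general algebraic group this fails (torsion, disconnectedness, wild phenomena), but for a vector-group target in characteristic $0$ one has the clean equivalence $\operatorname{Hom}_{K}(X,V)\cong\operatorname{Hom}_{\mathcal O_K}(\Lie X,\Lie V)$ and the $p$-adic logarithm / formal-group integration (as in \cite{ZA96}, already cited at the start of Section~\ref{sec:ramified}) to go back and forth between $\Lie X(K)$ and $X(K)$ compatibly. I would make sure the constructed section is a homomorphism of Lie \emph{groups} (not merely a set map) by transporting the algebraic section, and I expect the verification that the two constructions are mutually inverse to be a short diagram chase once this identification is in place. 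Since the excerpt already grants \cite[Lemma 3.1.1, p. 641]{CO91} as the model, I would simply indicate that the same argument applies verbatim, the only new point being that $X$ and $Y$ need not be (semi-)abelian — only that $V$ is a vector group — and the cited proof never uses more than that about the quotient.
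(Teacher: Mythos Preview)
Your approach has a genuine gap. The central step --- ``$r$ comes from a unique homomorphism $R\colon X\to V$ of $K$-groups'' --- is false in precisely the situations the lemma is meant to cover. Take the universal vectorial extension $0\to\omega_{A^\vee}\to A^\#\to A\to 0$ of an abelian variety: the Lie-algebra sequence is a short exact sequence of finite-dimensional $K$-vector spaces and hence splits, but the extension of algebraic groups admits no algebraic section (if it did, $A^\#$ would be a trivial extension, contradicting universality). So there is no bijection $\Hom_{K\text{-grp}}(X,V)\cong\Hom_K(\Lie X,\Lie V)$ in general, and the universal property in \eqref{def:UVE} that you invoke says something quite different (it identifies $\Hom_{\mathcal O_K}(\omega_{G^\vee},V)$ with $\Ext^1_K(M,V)$, not $\Hom$ with $\Hom$). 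Your torsor reduction then collapses as well, since the difference of two Lie-group splittings $Y(K)\to X(K)$ has no reason to be algebraic.

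The paper's argument avoids scheme-theoretic morphisms entirely and works directly with Zarhin's $p$-adic logarithm: for every commutative algebraic $K$-group $H$ there is a functorial Lie-group homomorphism $\lambda_H\colon H(K)\to\Lie H(K)$ with $\Lie\lambda_H=\mathrm{id}$, which is the identity when $H$ is a vector group. One then uses \emph{retractions} rather than sections: a retraction $s\colon X(K)\to V(K)$ yields $\Lie s$, and a Lie-algebra retraction $r$ yields $r\circ\lambda_X\colon X(K)\to\Lie V(K)=V(K)$. Mutual inversion is the one-line computation $\Lie s\circ\lambda_X=\lambda_V\circ s=s$ and $\Lie(r\circ\lambda_X)=r\circ\Lie\lambda_X=r$. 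You mention $\lambda$ in passing, but it is the whole mechanism, not an auxiliary faithfulness check; the route you sketch instead attempts to produce \emph{scheme-theoretic} splittings that do not exist.
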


\begin{proof}
Consider the following commutative diagram
\[\begin{tikzcd}
0 \ar[r] & V(K) \ar[d, equal] \ar[r] & X(K) \ar[r] \ar[d, "\lambda_{X}"] & Y(K) \ar[d, "\lambda_Y"] \ar[r] & 0 \\
0 \ar[r] & \Lie V(K) \ar[r] & \Lie X(K) \ar[r] \ar[r] & \Lie Y(K) \ar[r] & 0
\rlap{\ .}
\end{tikzcd}\]
If $s: X(K) \to V(K)$ is a splitting of \eqref{lem:spl-es1} then $\Lie s: \Lie X(K) \to \Lie V(K)$ is a splitting of \eqref{lem:spl-es2} that satisfies $ \Lie s \circ \lambda_X = s$. For the converse, let $r: \Lie X(K) \to \Lie V(K)$ be a splitting of \eqref{lem:spl-es2}. Then 
$$s: X(K) \xrightarrow{\lambda_X}  \Lie X(K) \xrightarrow{r} \Lie V(K) = V(K)$$ 
is a splitting of \eqref{lem:spl-es1}. Moreover, by the properties of the logarithm (see \cite[p. 5]{ZA96}), this map is such that $\Lie s = r$.
\end{proof}    

\begin{lemma} \label{lem:etas}
Let $\eta: G(K) \to G^\natural(K)$ and $\eta^\vee: G^\vee(K) \to G^{\vee \natural}(K)$ be a pair of splittings of \eqref{UVE1} and \eqref{UVE2}, respectively. Then we can define new splittings $\tilde \eta, \tilde \eta^\vee$ such that
\begin{align*}
\Lie \tilde \eta := \Lie \eta_T \times \Lie \eta_A & : \Lie G(K) \cong \Lie T(K) \times \Lie A(K) \to \Lie T^\natural(K) \times \Lie A^\#(K) \cong \Lie G^\natural(K), \\
\Lie \tilde \eta^\vee := \Lie \eta_T^\vee \times \Lie \eta_A^\vee & : \Lie G^\vee(K) \cong \Lie T^\vee(K) \times \Lie A^\vee(K) \to \Lie T^{\vee \natural}(K) \times \Lie A^{\vee \#}(K) \cong \Lie G^{\vee \natural}(K),
\end{align*}
where $\eta_T: T(K) \to T^\natural(K), \eta_T^\vee: T^\vee(K) \to T^{\vee \natural}(K)$ are homomorphic sections of the projections
$$pr_2: T^\natural(K) \to T(K), \quad pr_2: T^{\vee \natural}(K) \to T^\vee(K),$$ 
respectively, and $\eta_A: A(K) \to A^\#(K), \eta_A^\vee: A^\vee(K) \to A^{\vee \#}(K)$ are homomorphic sections of
$$\theta_A: A^\#(K) \to A(K), \quad \theta_{A^\vee}: A^{\vee \#}(K) \to A^\vee(K),$$
respectively. Moreover, if $(\eta, \eta^\vee)$ are dual with respect to $(\us, \us)^{Del}_M$ then $(\eta_T, \eta_T^\vee)$ are dual with respect to $(\us, \us)_T$, $(\eta_A, \eta_A^\vee)$ are dual with respect to $(\us, \us)^{Del}_A$, and $(\tilde \eta, \tilde \eta^\vee)$ are dual with respect to $(\us, \us)^{Del}_M$.
\end{lemma}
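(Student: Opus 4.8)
The plan is to pass to Lie algebras throughout: by Lemma~\ref{lem:spl}, a splitting of any of the exact sequences of algebraic $K$-groups occurring here is the same datum as a splitting of the associated sequence of Lie algebras. I will use repeatedly the compatibility identities $\Lie\theta\circ j^\natural=j\circ\Lie(pr_2)$, $\Lie(pr_2)\circ\bar j^\natural=\bar j\circ\Lie\theta$, $\Lie\theta_A\circ q^\natural=q\circ\Lie\theta$ and $\Lie\theta\circ s=\bar q\circ\Lie\theta_A$, where $s\colon\Lie A^\#\to\Lie G^\natural$ (resp.\ $s^\vee\colon\Lie A^{\vee\#}\to\Lie G^{\vee\natural}$) is the section of $q^\natural$ (resp.\ $q^{\vee\natural}$) induced by the splitting $\bar j^\natural$ (resp.\ $\bar j^{\vee\natural}$) of \eqref{es_lanat}, so that $h\mapsto(\bar j^\natural h,q^\natural h)$ and $(x,a)\mapsto j^\natural x+s a$ are the mutually inverse isomorphisms $\Lie G^\natural\cong\Lie T^\natural\times\Lie A^\#$ of the statement. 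These all come from short chases in \eqref{notation_UVE1}--\eqref{notation_UVE2} using $\theta'\circ\bar\gamma=\theta$, $\theta'\circ\iota^\#=\iota$ and $\theta_A\circ\pi^\#=\pi\circ\theta$, together with the explicit formula for $\bar j^\natural$; of course one has the analogous identities on the dual side, and their analogues for $A$.

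\emph{The component splittings and $\tilde\eta$.} Put $\Lie\eta_T:=\bar j^\natural\circ\Lie\eta\circ j$ and $\Lie\eta_A:=q^\natural\circ\Lie\eta\circ\bar q$, and symmetrically $\Lie\eta_T^\vee,\Lie\eta_A^\vee$. Since $\Lie\theta\circ\Lie\eta=Id$ and $\bar j\circ j=Id$, $q\circ\bar q=Id$, $j\circ\bar j+\bar q\circ q=Id_{\Lie G}$ by \eqref{es_la}, the identities above give $\Lie(pr_2)\circ\Lie\eta_T=Id_{\Lie T}$ and $\Lie\theta_A\circ\Lie\eta_A=Id_{\Lie A}$. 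By Lemma~\ref{lem:spl} these linear sections integrate to homomorphic sections $\eta_T\colon T(K)\to T^\natural(K)$ of $pr_2$ and $\eta_A\colon A(K)\to A^\#(K)$ of $\theta_A$ (and likewise $\eta_T^\vee,\eta_A^\vee$). Next set $\Lie\tilde\eta:=\Lie\eta_T\times\Lie\eta_A$ via the stated identifications, that is $\Lie\tilde\eta(g)=j^\natural\bigl(\Lie\eta_T(\bar j g)\bigr)+s\bigl(\Lie\eta_A(q g)\bigr)$; using $\Lie\theta\circ j^\natural=j\circ\Lie(pr_2)$, $\Lie\theta\circ s=\bar q\circ\Lie\theta_A$ and $j\circ\bar j+\bar q\circ q=Id_{\Lie G}$ one obtains $\Lie\theta\circ\Lie\tilde\eta=Id_{\Lie G}$, so Lemma~\ref{lem:spl} produces a splitting $\tilde\eta$ of \eqref{UVE1}; similarly $\tilde\eta^\vee$ splits \eqref{UVE2}. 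Reading off the formula for $\Lie\tilde\eta$ gives the identities $\bar j^\natural\circ\Lie\tilde\eta=\Lie\eta_T\circ\bar j$ and $q^\natural\circ\Lie\tilde\eta=\Lie\eta_A\circ q$, to be used below.

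\emph{Duality.} By Lemma~\ref{lem:Del_pairing_M} and $\bar j^\natural\circ j^\natural=Id$, $q^\natural\circ j^\natural=0$, $\bar j^\natural\circ s=0$, $q^\natural\circ s=Id$, the pairing $(\us,\us)^{Del}_M$ is the orthogonal sum of $(\us,\us)_T$ carried on $j^\natural(\Lie T^\natural)\times j^{\vee\natural}(\Lie T^{\vee\natural})$ and of $(\us,\us)^{Del}_A$ carried on $s(\Lie A^\#)\times s^\vee(\Lie A^{\vee\#})$; in particular the two cross blocks vanish. Suppose $(\eta,\eta^\vee)$ is dual for $(\us,\us)^{Del}_M$, and write $F:=\bar j^\natural\circ\Lie\eta$, $H:=q^\natural\circ\Lie\eta$ and their duals; then Lemma~\ref{lem:Del_pairing_M} turns the hypothesis into the relation $(F g,F^\vee g^\vee)_T+(H g,H^\vee g^\vee)^{Del}_A=0$ for all $g,g^\vee$. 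Decompose $g=j v+\bar q a$ and $g^\vee=j^\vee v'+\bar q^\vee a'$: one has $F(j v)=\Lie\eta_T(v)$ and $H(\bar q a)=\Lie\eta_A(a)$, while the mixed terms $F(\bar q a)$ and $H(j v)$ lie in $\omega_{T^\vee}=\ker\Lie(pr_2)$ and $\omega_{A^\vee}=\ker\Lie\theta_A$ respectively (dually, $F^\vee(\bar q^\vee a')\in\omega_T$ and $H^\vee(j^\vee v')\in\omega_A$). Taking $v=v'=0$, the first summand of the relation vanishes because $(\us,\us)_T$ kills $\omega_{T^\vee}\times\omega_T$ by Lemma~\ref{lem:Del_pairing_T}, so $(\Lie\eta_A a,\Lie\eta_A^\vee a')^{Del}_A=0$, i.e.\ $(\eta_A,\eta_A^\vee)$ is dual for $(\us,\us)^{Del}_A$. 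Taking $a=a'=0$, one is left with $(\Lie\eta_T v,\Lie\eta_T^\vee v')_T+(H(jv),H^\vee(j^\vee v'))^{Del}_A=0$, and the second summand vanishes because $(\us,\us)^{Del}_A$ kills $\omega_{A^\vee}\times\omega_A$ (isotropy of the Hodge filtration for Deligne's pairing), so $(\eta_T,\eta_T^\vee)$ is dual for $(\us,\us)_T$. Finally, using $\bar j^\natural\circ\Lie\tilde\eta=\Lie\eta_T\circ\bar j$, $q^\natural\circ\Lie\tilde\eta=\Lie\eta_A\circ q$ and Lemma~\ref{lem:Del_pairing_M},
\[
\bigl(\Lie\tilde\eta(g),\Lie\tilde\eta^\vee(g^\vee)\bigr)^{Del}_M=\bigl(\Lie\eta_T(\bar j g),\Lie\eta_T^\vee(\bar j^\vee g^\vee)\bigr)_T+\bigl(\Lie\eta_A(q g),\Lie\eta_A^\vee(q^\vee g^\vee)\bigr)^{Del}_A=0,
\]
hence $(\tilde\eta,\tilde\eta^\vee)$ is dual for $(\us,\us)^{Del}_M$.

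The bookkeeping with the maps of \eqref{notation_UVE1}--\eqref{es_lanat} is routine but must be done carefully; the four compatibility identities listed at the outset are where all the diagram information enters. The single ingredient not contained in the excerpt is that Deligne's pairing vanishes on $F^0\times F^0$ (equivalently $(\omega_{A^\vee},\omega_A)^{Del}_A=0$), a standard property of Deligne's pairing; it is used only in the torus-direction step, while the abelian-direction step needs nothing beyond the explicit matrix $\Gamma$ of Lemma~\ref{lem:Del_pairing_T}.
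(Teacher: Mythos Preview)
Your proof is correct and follows essentially the same approach as the paper: you define $\Lie\eta_T=\bar j^\natural\circ\Lie\eta\circ j$ and $\Lie\eta_A=q^\natural\circ\Lie\eta\circ\bar q$, integrate via Lemma~\ref{lem:spl}, and deduce the three duality statements from Lemma~\ref{lem:Del_pairing_M} by restricting to the torus and abelian directions separately. The external fact you flag---that $(\us,\us)^{Del}_A$ vanishes on $\omega_{A^\vee}\times\omega_A$---is exactly what the paper invokes as \cite[Cor.~2.1.1]{CO91}; your treatment is slightly more explicit in that you carry out the abelian-direction step (via the vanishing of the top-left block of $\Gamma$ in Lemma~\ref{lem:Del_pairing_T}) rather than leaving it to ``a similar fashion''.
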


\begin{proof}
Define $r_T: \Lie T(K) \to \Lie T^\natural(K)$ and $r_A: \Lie A(K) \to \Lie A^\#(K)$ such that they make the following diagram commute (see diagrams \eqref{es_la}, \eqref{es_lanat} for notation)
\[\begin{tikzcd}
\Lie T(K) \ar[r, "j"] \ar[d, dashed, "r_T"'] & \Lie G(K) \ar[d, "\Lie \eta"] & \Lie A(K) \ar[l, "\bar q"'] \ar[d, dashed, "r_A"] \\
\Lie T^\natural(K) & \Lie G^\natural(K) \ar[l, "\bar j^\natural"'] \ar[r, "q^\natural"] & \Lie A^\#(K)
\rlap{\ .}
\end{tikzcd}\]
From the definitions of $\bar j^\natural$ and $q^\natural$ we get that $r_T$ and $r_A$ are sections of 
\begin{gather*}
pr_2: \Lie T^\natural(K) \to \Lie T(K), \\
\Lie \theta_A: \Lie A^\#(K) \to \Lie A(K),
\end{gather*}
respectively. Notice that $r_T: \Lie T(K) \to \Lie T^\natural(K)$ is given by $r_T(z) = (\Lie (\sigma \circ \eta \circ \iota)(z), z)$. By Lemma \ref{lem:spl}, we can extend these homomorphisms in a canonical way to homomorphisms of Lie groups $\eta_T: T(K) \to T^\natural(K)$ and $\eta_A: A(K) \to A^\#(K)$, \textit{i.e.} satisfying $\Lie \eta_T = r_T$ and $\Lie \eta_A = r_A$, in such a way that they are sections of
\begin{gather*}
pr_2: T^\natural(K) \to T(K), \\
\theta_A: A^\#(K) \to A(K),
\end{gather*}
respectively. Notice that $\eta_T: T(K) \to T^\natural(K)$ is given by $\eta_T(t) = (\sigma \circ \eta \circ \iota(t), t)$. Let
$$\tilde r := r_T \times r_A: \Lie G(K) \cong \Lie T(K) \times \Lie A(K) \to \Lie T^\natural(K) \times \Lie A(K) \cong \Lie G^\natural(K)$$
and define $\tilde \eta: \Lie G(K) \to \Lie G^\natural$ as the morphism such that $\Lie \tilde \eta = \tilde r$. Clearly, $\tilde \eta$ is a section of $\theta$. We define $\eta^\vee_T$, $\eta^\vee_A$ and $\tilde \eta^\vee$ analogously. \\

Now suppose that $(\eta, \eta^\vee)$ are dual with respect to $(\us, \us)^{Del}_M$. We will prove that $(\eta_T, \eta_T^\vee)$ are dual splittings with respect to $(\us, \us)^{Del}_T$. By Lemma \ref{lem:Del_pairing_M}, we get the following equality for every $z \in \Lie T(K)$ and $z^\vee \in \Lie T^\vee(K)$
\begin{align*}
(\Lie \eta \circ j(z), \Lie\eta^\vee \circ j^\vee(z^\vee))^{Del}_M & = (\bar j^\natural \circ \Lie \eta \circ j(z), \bar j^{\vee \natural} \circ \Lie \eta^\vee \circ j^\vee(z^\vee))_T \\
& \quad + (q^\natural \circ \Lie \eta \circ j(z), q^{\vee \natural} \circ \Lie \eta^\vee \circ j^\vee(z^\vee))^{Del}_A . \nonumber
\end{align*}
Notice that $q^\natural \circ \Lie \eta \circ j: \Lie T(K) \to \Lie A^\#(K)$ becomes zero when composed with $\Lie \theta_A$ (see Definition \ref{def:jnat+qnat} and diagrams \eqref{notation_UVE2} and \eqref{es_Gnat}):
\begin{align*}
\Lie \theta_A \circ q^\natural \circ \Lie \eta \circ j & = \Lie(\theta_A \circ \pi^\# \circ \bar \gamma \circ \eta) \circ j \\
& = q \circ \Lie(\theta' \circ \bar \gamma \circ \eta) \circ j \\
& = q \circ \Lie(\theta \circ \eta) \circ j \\
& = 0
\end{align*}
This means that 
$$q^\natural \circ \Lie \eta \circ j(z) = (\omega, 0) \in \Lie A^\#(K)$$
is the trivial extension of $A^\vee$ by $\mathbb G_a$ endowed with a $\natural$-structure coming from an invariant differential $\omega \in \omega_{A^\vee}(K)$. Since the same is true for $q^{\vee \natural} \circ \Lie \eta^\vee \circ j^\vee(z^\vee)$ then, by \cite[Cor. 2.1.1, p. 638]{CO91}, 
$$(q^\natural \circ \Lie \eta \circ j(z), q^{\vee \natural} \circ \Lie \eta^\vee \circ j^\vee(z^\vee))^{Del}_A = 0,$$
and so
\begin{align*}
(\Lie \eta_T(z), \Lie \eta^\vee_T(z^\vee))_T & = (\bar j^\natural \circ \Lie \eta \circ j(z), \bar j^{\vee \natural} \circ \Lie \eta^\vee \circ j^\vee(z^\vee))_T \\
& = (\Lie \eta \circ j(z), \Lie\eta^\vee \circ j^\vee(z^\vee))^{Del}_M \\
& = 0 ,
\end{align*}
\textit{i.e.} $(\eta_T, \eta^\vee_T)$ are dual splittings with respect to $(\us, \us)_T$. The proof that $(\eta_A, \eta^\vee_A)$ are dual splittings with respect to $(\us, \us)^{Del}_A$ is carried out in a similar fashion. Now, to prove that $(\tilde \eta, \tilde \eta^\vee)$ are dual with respect to $(\us, \us)^{Del}_M$ consider the following commutative diagram
\begin{equation} \label{weightfil_spl}
\begin{tikzcd}[column sep=4em]
\Lie T(K) \ar[d, "\Lie \eta_T"'] & \Lie G(K) \ar[l, "\bar j"'] \ar[d, "\Lie \tilde \eta"] \ar[r, "q"] & \Lie A(K) \ar[d, "\Lie \eta_A"] \\
\Lie T^\natural(K) & \Lie G^\natural(K) \ar[l, "\bar j^\natural"'] \ar[r, "q^\natural"] & \Lie A^\#(K)
\rlap{\ ,}
\end{tikzcd}
\end{equation} 
as well as the corresponding one for $\tilde \eta^\vee$. From this and Lemma \ref{lem:Del_pairing_M} we conclude that for every $(h, h^\vee) \in \Lie G(K) \times \Lie G^\vee(K)$
\begin{align*}
(\Lie \tilde \eta (h), \Lie \tilde \eta^\vee (h^\vee))^{Del}_M & = (\Lie \tilde \eta_T \circ \bar j(h), \Lie \tilde \eta_T^\vee \circ \bar j^\vee(h^\vee))^{Del}_T \\
& \quad + (\Lie \tilde \eta_A \circ q(h), \Lie \tilde \eta_A^\vee \circ q^\vee(h^\vee))^{Del}_A \\
& = 0.
\end{align*}
\end{proof}


\begin{theorem} \label{thm:lambda-spl}
Let $r: \Lie G(K) \to \Lie G^\natural(K)$ and $r^\vee: \Lie G^\vee(K) \to \Lie G^{\vee \natural}(K)$ be a pair of splittings of the exact sequences of Lie algebras
\begin{gather*}
0 \to \omega_{G^\vee}(K) \xrightarrow{\Lie \zeta} \Lie G^\natural(K) \xrightarrow{\Lie \theta} \Lie G(K) \to 0, \\
0 \to \omega_G(K) \xrightarrow{\Lie \zeta^\vee} \Lie G^{\vee \natural}(K) \xrightarrow{\Lie \theta^\vee} \Lie G^\vee(K) \to 0 ,
\end{gather*}
respectively, which are dual with respect to $(\us, \us)^{Del}_M$. Then we have an induced $\lambda$-splitting 
\[\psi: P(K) \to K,\]
where $P$ is the Poincar\'e biextension.
\end{theorem}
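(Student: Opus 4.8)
The strategy follows Zarhin's method for abelian varieties: promote the given splittings to the level of $K$-points, use the moduli description of $G^\natural$ to attach to each point of $G$ a $\natural$-structure on the associated extension, pass to a homomorphism by means of the $p$-adic logarithm, and invoke the duality hypothesis to see that the two partial constructions obtained in this way agree.

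First I would apply Lemma \ref{lem:spl} to the exact sequences of algebraic $K$-groups $0 \to \omega_{G^\vee} \to G^\natural \to G \to 0$ and $0 \to \omega_G \to G^{\vee\natural} \to G^\vee \to 0$: the splittings $r, r^\vee$ lift canonically to splittings of Lie groups $\eta \colon G(K) \to G^\natural(K)$ and $\eta^\vee \colon G^\vee(K) \to G^{\vee\natural}(K)$ of \eqref{UVE1} and \eqref{UVE2} with $\Lie\eta = r$ and $\Lie\eta^\vee = r^\vee$. Since duality with respect to $(\us,\us)^{Del}_M$ is a condition only on the associated maps of Lie algebras, $(\eta,\eta^\vee)$ is again dual; by Lemma \ref{lem:etas} we may moreover replace it by the block-diagonal pair $(\tilde\eta,\tilde\eta^\vee)$, still dual and adapted to the decompositions $\Lie G\cong\Lie T\times\Lie A$, $G^\natural\cong\omega_{T^\vee}\times G^\#$ and their duals, so that the construction decouples into a torus part and an abelian part.

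Next comes the fibrewise construction. For $g\in G(K)$, Lemma \ref{lem:UVE}(i) identifies $\eta(g)\in G^\natural(K)$ with a $\natural$-structure on the extension $[L^\vee\to P_{g,G^\vee}]$ of $M^\vee$ by $\mathbb G_m$, hence in particular with a splitting $t_g\colon \Lie P_{g,G^\vee}(K)\to\Lie\mathbb G_m(K)=K$ of the Lie-algebra sequence of the extension $P_{g,G^\vee}$. As $t_g$ is a retraction onto $\Lie\mathbb G_m(K)$ and $\lambda_{\mathbb G_m}=\lambda$, the composite $t_g\circ\lambda_{P_{g,G^\vee}}\colon P_{g,G^\vee}(K)\to K$ is a group homomorphism restricting to $\lambda$ on $\mathbb G_m(K)=K^*$. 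Letting $g$ vary, these define a map $\psi\colon P(K)\to K$ with $\psi(c+x)=\lambda(c)+\psi(x)$ for $c\in K^*$ and $x\in P(K)$, whose restriction to every fibre $P_{g,G^\vee}(K)$ is a homomorphism. Running the symmetric construction with $\eta^\vee$ yields $\psi^\vee\colon P(K)\to K$ with the same equivariance and homomorphic on every fibre $P_{G,g^\vee}(K)$. If $\psi=\psi^\vee$, this common map is a homomorphism on both families of fibres, hence a $\lambda$-splitting.

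It remains to prove $\psi=\psi^\vee$; this is where the duality hypothesis enters, and it is the main obstacle. For $x\in P(K)$ above $(g,g^\vee)$, let $x^\natural\in P^\natural(K)$ be its lift to the point above $(\eta(g),\eta^\vee(g^\vee))$. Using the canonical connection $\nabla$ on $P^\natural$ recalled in Section \ref{sec:derham}, the quantities $\psi(x)$ and $\psi^\vee(x)$ are recovered from $\nabla$ by transporting $x^\natural$ in the $\eta$- and in the $\eta^\vee$-direction respectively; their difference $\psi^\vee(x)-\psi(x)$ therefore depends only on $(g,g^\vee)$, is biadditive in it, and is obtained by evaluating the curvature of $\nabla$ — that is, Deligne's pairing $(\us,\us)^{Del}_M$ — on $\Lie\eta$ and $\Lie\eta^\vee$. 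By Lemma \ref{lem:Del_pairing_M} this difference decomposes as $(\us,\us)_T\circ(\Lie\eta_T,\Lie\eta_T^\vee)$ plus $(\us,\us)^{Del}_A\circ(\Lie\eta_A,\Lie\eta_A^\vee)$; the first term vanishes because $(\eta_T,\eta_T^\vee)$ are dual with respect to $(\us,\us)_T$ — for split $L$ and $T$ this is also a direct computation with the matrix of Lemma \ref{lem:Del_pairing_T} — and the second because $(\eta_A,\eta_A^\vee)$ are dual with respect to $(\us,\us)^{Del}_A$, which is exactly the data from which Zarhin builds a $\lambda$-splitting of the Poincar\'e biextension $P_A$ of $A$ (see \cite{ZA90}, and \cite{CO91,IW03} in the ordinary case). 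Hence $\psi=\psi^\vee$, and it is the desired $\lambda$-splitting. The delicate point is to make rigorous the identification of $\psi^\vee-\psi$ with the restriction of Deligne's pairing, which rests on the explicit description of $\nabla$ on $P^\natural$ and on its compatibility with Baer sums and with the logarithm $\lambda$.
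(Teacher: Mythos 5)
Your outline takes the same route as the paper --- lift $r,r^\vee$ to group-level splittings via Lemma~\ref{lem:spl}, replace them by $(\tilde\eta,\tilde\eta^\vee)$ via Lemma~\ref{lem:etas}, extract fibrewise rigidifications from the moduli description in Lemma~\ref{lem:UVE}, push through $\lambda$ to get two candidate maps, and kill the discrepancy with the duality hypothesis, using Lemma~\ref{lem:Del_pairing_T} for the torus and Coleman's results for the abelian part. But the step you flag at the end as ``delicate'' is in fact the whole content of the argument, and you have not supplied it: the precise statement that $\psi^\vee(x)-\psi(x)$ equals Deligne's pairing evaluated at $\Lie\tilde\eta(\lambda_G(g))$ and $\Lie\tilde\eta^\vee(\lambda_{G^\vee}(g^\vee))$ does not follow merely from the observation that $\nabla$ has curvature $-(\,\us,\us\,)^{Del}_M$; it requires tracking concretely how the rigidifications interact with the $p$-adic logarithms on each fibre.

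The paper does not argue abstractly through the connection $\nabla$ and its curvature. Instead it uses the splittings of \eqref{es_vg1} fixed at the start of Section~\ref{sec:ramified} to decompose $\Lie P_{g,G^\vee}\cong\Lie T^\vee\times\Lie P_{a,A^\vee}$ as in diagram~\eqref{lambda-spl}, writes the rigidification as a sum $s_g=s_g^1+s_g^2$ of a torus piece (corresponding to $\sigma\circ\tilde\eta(g)\in\omega_{T^\vee}(K)$) and an abelian piece (corresponding to $\pi^\natural\circ\tilde\eta(g)\in A^\#(K)$), and proves $\psi_1=\psi_2$ by establishing the two scalar identities $s_g^1(z_g^1)=s_{g^\vee}^1(z_{g^\vee}^1)$ and $s_g^2(z_g^2)=s_{g^\vee}^2(z_{g^\vee}^2)$ separately. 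For the first one, the key computation identifies $z^1_{g^\vee}=\bar j\circ\lambda_G(g)$ and $z^1_g=\bar j^\vee\circ\lambda_{G^\vee}(g^\vee)$ and then derives, from the matrix in Lemma~\ref{lem:Del_pairing_T}, the exact formula $(\Lie\eta_T(z^1_{g^\vee}),\Lie\eta_T^\vee(z^1_g))_T=s_g^1(z_g^1)-s_{g^\vee}^1(z_{g^\vee}^1)$; the second one reduces, by functoriality of $\lambda$, to the abelian-variety statement that dual Hodge splittings give equal $\lambda$-splittings of $P_A(K)$. You should make these identifications explicit --- in particular, that the arguments fed into $(\,\us,\us\,)_T$ are the logarithm images of $g$ and $g^\vee$ pushed through the retractions $\bar j,\bar j^\vee$, not unspecified Lie algebra vectors --- otherwise the appeal to duality of $(\eta_T,\eta_T^\vee)$ and $(\eta_A,\eta_A^\vee)$ does not attach to anything.
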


\begin{proof}
Let $g \in G(K)$ be a section above $a \in A(K)$. First note that, from the splitting of $\Lie G^\vee$ in \eqref{es_la}, we also obtain a splitting of $\Lie P_{g, G^\vee}$ by pullback
\begin{equation} \label{lambda-spl}
\begin{tikzcd}
& & 0 \ar[d] & 0 \ar[d] & \\
& & \Lie \mathbb G_m \ar[d] \ar[r, equal] & \Lie \mathbb G_m \ar[d] & \\
0 \ar[r] & \Lie T^\vee \ar[d, "\cong"']  \ar[r] & \Lie P_{g, G^\vee} \arrow[dr, phantom, "\lrcorner", very near start] \ar[d] \ar[r] \ar[l, dashed, bend right] & \Lie P_{a, A^\vee} \ar[d] \ar[r] \ar[l, dashed, bend right] & 0 \\
0 \ar[r] & \{g\} \times \Lie T^\vee \ar[r, "j^\vee"'] & \{g\} \times \Lie G^\vee \ar[d] \ar[r, "q^\vee"'] \ar[l, dashed, bend right, "\bar j^\vee"'] & \{a\} \times \Lie A^\vee \ar[d] \ar[r] \ar[l, dashed, bend right, "\bar q^\vee"'] & 0 \\
& & 0 & 0 & 
\end{tikzcd}
\rlap{\ .}
\end{equation}
In a similar way, we induce a splitting of $\Lie P_{G, g^\vee}$, for all $g^\vee \in G^\vee(K)$. \\

Let $\eta: G(K) \to G^\natural(K)$ and $\eta^\vee: G^\vee(K) \to G^{\vee \natural}(K)$ be the splittings of \eqref{UVE1} and \eqref{UVE2}, respectively, such that $\Lie \eta = r$ and $\Lie \eta^\vee = r^\vee$, and let $\eta_T$, $\eta_T^\vee$, $\eta_A$, $\eta_A^\vee$, $\tilde \eta$ and $\tilde \eta^\vee$ be as constructed in Lemma \ref{lem:etas}. Consider the following diagram
\begin{equation} \label{Gnat_eta}
\begin{tikzcd}
& G(K) \ar[d, "\tilde \eta"] \ar[r, "\pi"] & A(K) \ar[d, "\eta_A"'] \\
\omega_{T^\vee}(K) & G^\natural(K) \ar[l, "\sigma"'] \ar[r, "\pi^\natural"] & A^\#(K)
\rlap{\ .}
\end{tikzcd}
\end{equation}

Denote by $s_{g}^1: \Lie T^\vee \to K$ the morphism of Lie algebras corresponding to the invariant differential $\sigma \circ \tilde \eta(g) \in \omega_{T^\vee}(K)$. By \cite[Thm. 0.3.1, p. 633]{CO91} (see also Lemma \ref{lem:UVE} (iii)) we have that $\pi^\natural \circ \tilde \eta(g) \in A^\#(K)$ is represented by the $\mathbb G_m$-extension $P_{a, A^\vee}$ of $A^\vee$ equipped with a normal invariant differential, which corresponds to a morphism $s_{g}^2: \Lie P_{a, A^\vee} \to K$. We define
\begin{align*}
s_g: \Lie P_{g, G^\vee} \cong \Lie T^\vee \times \Lie P_{a, A^\vee} & \to K \\
z = (z^1, z^2) & \mapsto s_g^1(z^1) + s_g^2(z^2) .
\end{align*}
This is a rigidification of $P_{g, G^\vee}$, considered as an extension of $G^\vee$ by $\mathbb G_m$. For every $g^\vee \in G^\vee(K)$, we let $a^\vee := \pi^\vee(g^\vee)$, and define the rigidification $s_{g^\vee}: \Lie P_{G, g^\vee} \to K$ of $P_{G, g^\vee}$ analogously as
\begin{align*}
s_{g^\vee}: \Lie P_{G, g^\vee} \cong \Lie T \times \Lie P_{A, a^\vee} & \to K \\
z = (z^1, z^2) & \mapsto s_{g^\vee}^1(z^1) + s_{g^\vee}^2(z^2) ,
\end{align*}
where $s_{g^\vee}^1: \Lie T \to K$ is the morphism corresponding to the invariant differential $\sigma^\vee \circ \tilde \eta^\vee(g^\vee) \in \omega_{T}(K)$, and $s_{g^\vee}^2: \Lie P_{A, a^\vee} \to K$ is the morphism corresponding to the normal invariant differential on $P_{A, a^\vee}$ associated to $\pi^{\vee \natural} \circ \tilde \eta^\vee(g^\vee) \in A^{^\vee \#}(K)$. \\
    
Let $y \in P(K)$ lie above $(g, g^\vee) \in G(K) \times G^\vee(K)$. We define maps $\psi_1, \psi_2: P(K) \to K$ as follows
$$\psi_1(y) = s_g \circ \lambda_{P_{g, G^\vee}}(y), \quad \psi_2(y) = s_{g^\vee} \circ \lambda_{P_{G, g^\vee}}(y).$$
\noindent\begin{minipage}{0.55\linewidth}
\begin{equation} \label{diag_def_rho-spl1}
\begin{tikzcd}
K^* \ar[d, hook] \ar[r, "\lambda"] & K \ar[d, hook] \\
P_{g, G^\vee}(K) \ar[r, "\lambda_{P_{g, G^\vee}}"] \ar[d] & \Lie P_{g, G^\vee}(K) \ar[d] \ar[u, dashed, bend right, "s_g"'] \\
\{g\} \times G^\vee(K) \ar[r, "\lambda_{G^\vee}"] & \Lie G^\vee(K)
\end{tikzcd}
\end{equation}
\end{minipage}%
\begin{minipage}{0.5\linewidth}
\[\begin{tikzcd}
K^* \ar[d, hook] \ar[r, "\lambda"] & K \ar[d, hook] \\
P_{G, g^\vee}(K) \ar[r, "\lambda_{P_{G, g^\vee}}"] \ar[d] & \Lie P_{G, g^\vee}(K) \ar[d] \ar[u, dashed, bend right, "s_{g^\vee}"'] \\
G(K) \times \{g^\vee\} \ar[r, "\lambda_{G}"] & \Lie G(K)
\end{tikzcd}\]
\end{minipage}

\begin{claim}
$\psi_1 = \psi_2$.
\end{claim}

\begin{proof}
Denote 
\begin{align*}
(z_g^1, z_g^2) & := \lambda_{P_{g, G^\vee}}(y) \in \Lie P_{g, G^\vee} \cong \Lie T^\vee \times \Lie P_{a, A^\vee}, \\
(z_{g^\vee}^1, z_{g^\vee}^2) & := \lambda_{P_{G, g^\vee}}(y) \in \Lie P_{G, g^\vee} \cong \Lie T \times \Lie P_{A, a^\vee}.
\end{align*}
To prove the claim it suffices to show that 
\begin{align*}
s_{g}^1(z_g^1) & = s_{g^\vee}^1(z_{g^\vee}^1), \\
s_{g}^2(z_g^2) & = s_{g^\vee}^2(z_{g^\vee}^2).
\end{align*}
\begin{enumerate}[(i)]
\item $s_{g}^1(z_g^1) = s_{g^\vee}^1(z_{g^\vee}^1)$: From the commutativity of diagram \eqref{lambda-spl} and the analogous one for $P_{G, g^\vee}$ we get that
$$z^1_{g^\vee} = \bar j \circ \lambda_G(g) \in \Lie T (K), \quad  z^1_g = \bar j^\vee \circ \lambda_{G^\vee}(g^\vee) \in \Lie T^\vee(K).$$
Therefore, we have 
\begin{align*}
\Lie \eta_T(z^1_{g^\vee}) & = \Lie \eta_T \circ \bar j \circ \lambda_G(g) \\
& = \bar j^\natural \circ \Lie \tilde \eta \circ \lambda_G(g) \\
& = (\sigma \circ \Lie \tilde \eta \circ \lambda_G(g), \bar j \circ \Lie \theta' \circ \Lie \gamma' \circ \Lie \tilde \eta \circ \lambda_G(g)) \\
& = (\sigma \circ \tilde \eta(g), \bar j \circ \Lie \theta \circ \Lie \tilde \eta \circ \lambda_G(g)) \\
& = (\sigma \circ \tilde \eta(g), \bar j \circ \lambda_G(g)) \in \omega_{T^\vee}(K) \times \Lie T(K) = \Lie T^\natural(K),
\end{align*}
where the second equality comes from the commutativity of diagram \eqref{weightfil_spl} in the proof of Lemma \ref{lem:etas}, the third one from the definition of $\bar j^\natural$ (see diagram \eqref{es_lanat}), the fourth one from the fact that $\theta' \circ \gamma' = Id$, and the last one from the fact that $\theta \circ \tilde \eta = Id$. Similarly, 
$$\Lie \eta^\vee_T(z^1_g) = (\sigma^\vee \circ \tilde \eta^\vee(g^\vee), \bar j^\vee \circ \lambda_{G^\vee}(g^\vee)) \in \omega_{T}(K) \times \Lie T^\vee(K) = \Lie T^{\vee \natural}(K).$$ 
By Lemma \ref{lem:Del_pairing_T}, we have
$$(\Lie \eta_T(z^1_{g^\vee}), \Lie \eta^\vee_T(z^1_g))_T = s^1_g(z^1_g) - s^1_{g^\vee}(z^1_{g^\vee}).$$
Since $(\eta_T, \eta^\vee_T)$ are dual, we get the desired equality.
    
\item $s_{g}^2(z_g^2) = s_{g^\vee}^2(z_{g^\vee}^2)$: Let $y_A \in P_A(K)$ be the image of $y$. Then, by functoriality of the logarithm, we get 
$$z^2_g = \lambda_{P_{a, A^\vee}}(y_A), \quad z^2_{g^\vee} = \lambda_{P_{A, a^\vee}}(y_A).$$ 
Notice that, because of the commutativity of diagram \eqref{Gnat_eta}, we have 
\begin{align*}
\eta_A(a) & = \eta_A \circ \pi (g) \\
& = \pi^\natural \circ \tilde \eta(g) \in A^\#(K).
\end{align*}
Similarly,
$$\eta^\vee_A(a^\vee) = \pi^{\vee \natural} \circ \tilde \eta^\vee(g) \in A^{\vee \#}(K).$$ 
Hence, if we denote by $s_a$ the rigidification of $P_{a, A^\vee}$ determined by $\eta_A(a)$ and by $s_{a^\vee}$ the rigidification of $P_{A, a^\vee}$ determined by $\eta^\vee_A(a^\vee)$ then $s_a = s_{g}^2$ and $s_{a^\vee} = s_{g^\vee}^2$. Since $(\eta_A, \eta^\vee_A)$ are dual, the $\lambda$-splittings of $P_A(K)$ obtained from $\eta_A$ and $\eta^\vee_A$ coincide (see Proposition 3.1.2, Corollary 3.1.3 and Proposition 3.2.1 in \cite[p. 642--643]{CO91}). This implies that
\begin{align*}
s_{g}^2(z_g^2) & = s_a \circ \lambda_{P_{a, A^\vee}}(y_A) \\
& = s_{a^\vee} \circ \lambda_{P_{A, a^\vee}}(y_A) \\
& = s_{g^\vee}^2(z_{g^\vee}^2) .
\end{align*}
\end{enumerate}
\end{proof}

Therefore, we can define 
$$\psi := \psi_1 = \psi_2.$$
It only remains to check that $\psi$ is indeed a $\lambda$-splitting. Using the definition of $\psi_1$ we get that for all $c \in K^*$ and $y \in P(K)$ lying above $(g, g^\vee) \in G(K) \times G^\vee(K)$
\begin{align*}
\psi(c + y) & = s_g \circ \lambda_{P_{g, G^\vee}}(c + y) \\
& = s_g \circ \lambda_{P_{g, G^\vee}}(c) + s_g \circ \lambda_{P_{g, G^\vee}}(y) \\
& = \lambda(c) + \psi(y) ,
\end{align*}
where the last equality holds because of the commutativity of diagram \eqref{diag_def_rho-spl1}. Also, for $y, y' \in P_{g, G^\vee}(K)$,
\begin{align*}
\psi(y + y') & = s_g \circ \lambda_{P_{g, G^\vee}}(y +_1 y') \\
& = s_g \circ \lambda_{P_{g, G^\vee}}(y) + s_g \circ \lambda_{P_{g, G^\vee}}(y') \\
& = \psi(y) + \psi(y').
\end{align*}
Finally, from the definition of $\psi_2$ it follows that $\psi$ is also compatible with the group structure $+_2$ of $P(K)$.
\end{proof}


\begin{theorem}
In the situation of Theorem \ref{thm:lambda-spl}, assume that $\eta$ and $\eta^\vee$ make the following diagrams commute
\[\xymatrix{ 
L(K) \ar@{=}[r] \ar[d]_{u} & L(K) \ar[d]^{u^\natural} \\
G(K) \ar[r]^{\eta} & G^\natural(K)
}
\quad
\xymatrix{
L^\vee(K) \ar@{=}[r] \ar[d]_{u^\vee} & L^\vee(K) \ar[d]^{u^{\vee \natural}} \\
G^\vee(K) \ar[r]^{\eta^\vee} & G^{\vee \natural}(K)}\]
and, moreover, that $\eta = \tilde \eta$, $\eta^\vee = \tilde \eta^\vee$, where $\tilde \eta$ and $\tilde \eta^\vee$ are the morphisms of Lemma \ref{lem:etas}. Then the $\lambda$-splitting $\psi: P(K) \to K$ constructed in Theorem \ref{thm:lambda-spl} is compatible with the $L \times L^\vee$-linearization of $P$. In particular, it induces a $\lambda$-splitting of the biextension $Q_M(K)$ of $(M(K), M^\vee(K))$ by $K^*$ in the case that $u(K)$ and $u^\vee(K)$ are injective. 
\end{theorem}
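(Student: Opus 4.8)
The plan is to verify condition (ii) in the definition of compatibility, namely that $\psi\circ\tau$ vanishes on $L(K)\times G^\vee(K)$ and $\psi\circ\tau'$ vanishes on $G(K)\times L^\vee(K)$, where $\tau\colon L\times G^\vee\to P$ and $\tau'\colon G\times L^\vee\to P$ are the trivializations giving $P$ its biextension structure over $(M,M^\vee)$. The two assertions are symmetric, so I only treat $\tau$. Fix $\ell\in L(K)$ and $g^\vee\in G^\vee(K)$; then $\tau(\ell,g^\vee)$ lies above $(u(\ell),g^\vee)$, and $\tau(\ell,\us)\colon G^\vee\to P_{u(\ell),G^\vee}$ is a homomorphic section of $P_{u(\ell),G^\vee}\to G^\vee$, i.e.\ a splitting of $0\to\mathbb G_m\to P_{u(\ell),G^\vee}\to G^\vee\to 0$. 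By functoriality of the logarithm $\lambda_{(-)}$ of \cite{ZA96} one has $\lambda_{P_{u(\ell),G^\vee}}\circ\tau(\ell,\us)=\Lie\tau(\ell,\us)\circ\lambda_{G^\vee}$, so that, using $\psi=\psi_1$,
\[\psi(\tau(\ell,g^\vee))=s_{u(\ell)}\bigl(\Lie\tau(\ell,\us)(\lambda_{G^\vee}(g^\vee))\bigr),\]
and it is enough to show $s_{u(\ell)}\circ\Lie\tau(\ell,\us)=0$, i.e.\ that the rigidification $s_{u(\ell)}$ of $P_{u(\ell),G^\vee}$ is the one opposite to the trivialization $\tau(\ell,\us)$.

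Next I would record that the map $s_g$ built in the proof of Theorem \ref{thm:lambda-spl} is precisely the retraction $\Lie P_{g,G^\vee}\to\Lie\mathbb G_m$ attached to $\tilde\eta(g)\in G^\natural(K)$ under the moduli description of Lemma \ref{lem:UVE}(i). Indeed, through the fixed splitting $G^\natural\cong\omega_{T^\vee}\times G^\#$ of \eqref{es_Gnat} the element $\tilde\eta(g)$ decomposes as $(\sigma(\tilde\eta(g)),\bar\gamma(\tilde\eta(g)))$: the component $\sigma(\tilde\eta(g))\in\omega_{T^\vee}(K)$ is, as an invariant differential, the summand $s_g^1$, while $\pi^\natural(\tilde\eta(g))=\pi^\#(\bar\gamma(\tilde\eta(g)))\in A^\#(K)$ yields, by Lemma \ref{lem:UVE}(iii), the summand $s_g^2$; combining them along $\Lie P_{g,G^\vee}\cong\Lie T^\vee\times\Lie P_{a,A^\vee}$ recovers $s_g=s_g^1+s_g^2$. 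A minor point here is to check that this decomposition of the rigidification along the chosen splitting of $\Lie G^\vee$ is compatible with the one coming from $G^\natural\cong\omega_{T^\vee}\times G^\#$, so that $s_g$ really is the invariantly-defined retraction attached to $\tilde\eta(g)$ and does not depend on the auxiliary splittings fixed at the start of the section beyond what Lemma \ref{lem:UVE} already dictates.

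Now I would exploit the hypotheses. Since $\eta=\tilde\eta$ and $\eta\circ u=u^\natural$ on $K$-points, $\tilde\eta(u(\ell))=u^\natural(\ell)$. As $L$ is constant we have $M(K)=G(K)/\Ima(u(K))$, so $u(\ell)$ goes to $0$ in $M(K)=\Ext^1_K(M^\vee,\mathbb G_m)$; hence the extension $[L^\vee\to P_{u(\ell),G^\vee}]$ of $M^\vee$ by $\mathbb G_m$ induced by $u(\ell)$ is the trivial one, with trivialization $\tau(\ell,\us)$. Under Lemma \ref{lem:UVE}(i), $u^\natural(\ell)$ corresponds to the pair $(u(\ell),\nabla_\ell)$ where $\nabla_\ell$ is the canonical $\natural$-structure carried by this split extension; by construction of Deligne's universal vectorial extension $M^\natural$ --- equivalently, of the canonical connection on $P^\natural$, which is set up to be horizontal along the biextension trivializations --- $\nabla_\ell$ is the $\natural$-structure for which $\tau(\ell,\us)$ is horizontal, i.e.\ whose associated rigidification annihilates $\Lie\tau(\ell,\us)(\Lie G^\vee)$. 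Together with the previous paragraph this gives $s_{u(\ell)}\circ\Lie\tau(\ell,\us)=0$ and hence $\psi(\tau(\ell,g^\vee))=0$. The vanishing of $\psi\circ\tau'$ on $G(K)\times L^\vee(K)$ is proved in the same way, using $\psi=\psi_2$, $\eta^\vee=\tilde\eta^\vee$, $\eta^\vee\circ u^\vee=u^{\vee\natural}$ and Lemma \ref{lem:UVE}(i) for $G^{\vee\natural}$. This shows $\psi$ is compatible with the $L\times L^\vee$-linearization of $P$. Finally, when $u(K)$ and $u^\vee(K)$ are injective, Proposition \ref{pro:quotient_biext} produces the quotient biextension $Q_M(K)$ of $(M(K),M^\vee(K))$ by $K^*$, and by the remark following the definition of compatibility a $\lambda$-splitting compatible with the linearization descends to a $\lambda$-splitting of $Q_M(K)$, which gives the last assertion.

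The main obstacle is the claim in the third paragraph that the $\natural$-structure singled out by $u^\natural(\ell)$ is exactly the one making the biextension trivialization $\tau(\ell,\us)$ horizontal. This is where the compatibility of $\eta,\eta^\vee$ with $u^\natural,u^{\vee\natural}$ enters essentially, and proving it cleanly requires unwinding Deligne's construction of $M^\natural$ in \cite{DE74} and the construction of the canonical connection on $P^\natural$ in \cite{BE09}; everything else (functoriality of the logarithm, the moduli interpretations of Lemma \ref{lem:UVE}, and the descent along Proposition \ref{pro:quotient_biext}) is essentially bookkeeping.
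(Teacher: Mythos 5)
Your reduction of the problem is the right one: you correctly identify that, via $\psi=\psi_1$ and functoriality of $\lambda$, it suffices to prove $s_{u(\ell)}\circ\Lie\tau(\ell,\us)=0$, i.e.\ that the retraction $s_{u(\ell)}$ produced by the construction of Theorem \ref{thm:lambda-spl} is the one complementary to the biextension trivialization. But your third paragraph asserts exactly this ("$\nabla_\ell$ is the $\natural$-structure for which $\tau(\ell,\us)$ is horizontal, ... by construction of Deligne's universal vectorial extension") without proving it, and you acknowledge this yourself as the ``main obstacle.'' That acknowledged gap is precisely the substance of the theorem; an appeal to ``the canonical connection on $P^\natural$ is set up to be horizontal along the biextension trivializations'' is not a proof, because the $\natural$-structure $\nabla_\ell$ attached to $u^\natural(\ell)$ via Lemma \ref{lem:UVE}(i) is a datum on the extension $[L^\vee\to P_{u(\ell),G^\vee}]$ of $M^\vee$, and its relationship to the trivialization $\tau(\ell,\us)$ of $P_{u(\ell),G^\vee}$ over $G^\vee$ is not a tautology; it needs to be computed.

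The paper fills in this gap by a concrete computation in the coordinates fixed at the start of Section \ref{sec:ramified}. Using $\eta\circ u=u^\natural$, it identifies the two pieces of the rigidification $s_{u(x)}$ separately: the $\omega_{T^\vee}$-component $s^1_{u(x)}$ equals $\Lie\chi$ (this is exactly where Lemma \ref{lem:UVE}(iv) enters, describing $u^\natural$ on the torus/lattice part), and the $A^\#$-component $s^2_{u(x)}$ equals the retraction $\xi$ of the split row in diagram \eqref{spl_P}. Writing $\lambda_{P_{u(x),G^\vee}}(\tau(x,g^\vee))$ in the product decomposition of $\Lie P_{u(x),G^\vee}$ then gives two terms, $\Lie\chi\circ\bar j^\vee\circ\lambda_{G^\vee}(g^\vee)$ from the torus piece and $\xi(-\Lie\chi\circ\bar j^\vee\circ\lambda_{G^\vee}(g^\vee),\lambda_{A^\vee}(a^\vee))=-\Lie\chi\circ\bar j^\vee\circ\lambda_{G^\vee}(g^\vee)$ from the abelian piece, and these cancel. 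Note in particular that neither piece of $s_{u(x)}$ is individually zero when $\chi\neq 0$; the vanishing comes from a cancellation between the two weight-graded contributions, which is not visible from the ``the connection is horizontal along trivializations'' slogan. Also, the hypothesis $\eta\circ u=u^\natural$ is used up front to identify $s^1_{u(x)}$ and $s^2_{u(x)}$; the remaining step (the cancellation) is a fact about the trivialization $\tau$ and the fixed splittings, not about $\eta$, so your diagnosis that ``this is where the compatibility of $\eta,\eta^\vee$ with $u^\natural,u^{\vee\natural}$ enters'' is slightly misplaced.
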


\begin{remark}
The condition $\eta \circ u = u^\natural$ says that, on $K$-sections, $(Id, \eta)$ is a splitting of the complex $M^\natural$ seen as an extension of $M$ by $\omega_{G^\vee}$; and similarly for $\eta^\vee$.
\end{remark}

\begin{proof}
We have to prove that the $\lambda$-splitting $\psi: P(K) \to K$ constructed in Theorem \ref{thm:lambda-spl} satisfies $\psi \circ \tau = 0$ and $\psi \circ \tau^\vee = 0$ on $K$-sections. \\

Let $x \in L(K)$ and denote by $\chi: T^\vee \to \mathbb G_m$ the homomorphism corresponding to it. We have the following diagram with exact rows (see \cite[\S 1.2]{AB05})
\[\begin{tikzcd}
0 \ar[r] & T^\vee \ar[r, "\iota^\vee"] \ar[d, "-\chi"'] & G^\vee \ar[r, "\pi^\vee"] \ar[d, "\tau'_x"] & A^\vee \ar[r] \ar[d, equal] & 0 \\
0 \ar[r] & \mathbb G_m \ar[r] & P_{v(x), A^\vee} \ar[r] & A^\vee \ar[r] & 0 \\
0 \ar[r] & \mathbb G_m \ar[r] \ar[u, equal] & P_{u(x), G^\vee} \ar[r] \ar[u] & G^\vee \ar[r] \ar[u] & 0
\rlap{\ ,}
\end{tikzcd}\]
where $v$ is the composition $L \xrightarrow{u} G \xrightarrow{\pi} A$. We also have the corresponding diagram of Lie algebras with exact rows and splittings induced by $\bar j^\vee$ and $\bar q^\vee$:
\begin{equation} \label{spl_P}
\begin{tikzcd}
0 \ar[r] & \Lie T^\vee \ar[r, "j^\vee"] \ar[d, "-\Lie \chi"'] & \Lie G^\vee \ar[r, "q^\vee"] \ar[d] \ar[l, dashed, bend right, "\bar j^\vee"'] & \Lie A^\vee \ar[r] \ar[d, equal] \ar[l, dashed, bend right, "\bar q^\vee"'] & 0 \\
0 \ar[r] & \Lie \mathbb G_m \ar[r] & \Lie P_{v(x), A^\vee} \ar[r] \ar[l, dashed, bend right, "\xi"'] & \Lie A^\vee \ar[r] \ar[l, dashed, bend right] & 0 \\
0 \ar[r] & \Lie \mathbb G_m \ar[r] \ar[u, equal] & \Lie P_{u(x), G^\vee} \ar[r] \ar[u] \ar[l, dashed, bend right] & \Lie G^\vee \ar[r] \ar[u] \ar[l, dashed, bend right] & 0
\rlap{\ .}
\end{tikzcd}
\end{equation}

By Lemma \ref{lem:UVE} (i), $u^\natural(x) \in G^\natural(K)$ corresponds to the extension $[L^\vee \to P_{u(x), G^\vee}]$ of $M^\vee$ by $\mathbb G_m$ endowed with a $\natural$-structure. We know that the invariant differential $\sigma \circ u^\natural(x) \in \omega_{T^\vee}(K)$ is the one associated to the homomorphism $\Lie \chi \in \Hom_{\mathcal O_K}(\Lie T^\vee, \mathbb G_a)$, by Lemma \ref{lem:UVE} (iv).
On the other hand, $\pi^\natural \circ u^\natural(x) \in A^\#(K)$ is the extension $P_{v(x), A^\vee}$ of $A^\vee$ by $\mathbb G_m$ endowed with the normal invariant differential associated to $\xi: \Lie P_{v(x), A^\vee} \to \Lie \mathbb G_m$. From our hypothesis that $\eta \circ u = u^\natural$, it follows that 
$$s_{u(x)}^1 = \Lie \chi: \Lie T^\vee \to \Lie \mathbb G_m,$$
since this is the morphism induced by $\sigma \circ \eta(u(x)) = \sigma \circ u^{\natural}(x)$, and 
$$s_{u(x)}^2 = \xi: \Lie P_{v(x), A^\vee} \to \Lie \mathbb G_m,$$ 
since this is the morphism induced by $\pi^\natural \circ \eta(u(x)) = \pi^\natural \circ u^\natural(x)$. \\

Let $g^\vee \in G^\vee(K)$. By setting $g = u(x)$, the middle row in diagram \eqref{lambda-spl} provides us with a decomposition $\Lie P_{u(x), G^\vee} \cong \Lie T^\vee \times \Lie P_{v(x), A^\vee}$ identifying 
$$\lambda_{P_{u(x), G^\vee}}(\tau(x, g^\vee)) = (\bar j^\vee \circ \lambda_{G^\vee}(g^\vee), \lambda_{P_{v(x), A^\vee}} \circ \tau'_x(g^\vee)).$$
Furthermore, the middle row of diagram \eqref{spl_P}, allows us to identify $\Lie P_{v(x), A^\vee}$ with $\Lie \mathbb G_m \times \Lie A^\vee$; under this isomorphism, $\lambda_{P_{v(x), A^\vee}} \circ \tau'_x(g^\vee)$ corresponds to
$$\lambda_{P_{v(x), A^\vee}} \circ \tau'_x(g^\vee) = (-\Lie \chi \circ \bar j^\vee \circ \lambda_{G^\vee}(g^\vee), \lambda_{A^\vee}(a^\vee)),$$ 
where $a^\vee \in A^\vee$ is the image of $g^\vee \in G^\vee$ under the canonical projection. Therefore, by \eqref{diag_def_rho-spl1}, we get that $\psi \circ \tau(x, g^\vee)$ equals
\begin{align*}
\psi \circ \tau(x, g^\vee) & = s_{u(x)} \circ \lambda_{P_{u(x), G^\vee}}(\tau(x, g^\vee)) \\
& = s^1_{u(x)}(\bar j^\vee \circ \lambda_{G^\vee}(g^\vee)) + s^2_{u(x)}(\lambda_{P_{v(x), A^\vee}} \circ \tau'_x(g^\vee)) \\
& = \Lie \chi \circ \bar j^\vee \circ \lambda_{G^\vee}(g^\vee) + \xi(-\Lie \chi \circ \bar j^\vee \circ \lambda_{G^\vee}(g^\vee), \lambda_{A^\vee}(a^\vee)) \\
& = \Lie \chi \circ \bar j^\vee \circ \lambda_{G^\vee}(g^\vee) - \Lie \chi \circ \bar j^\vee \circ \lambda_{G^\vee}(g^\vee) \\
& = 0 .
\end{align*}

The proof of the equality $\psi \circ \tau^\vee(g, x^\vee) = 0$ is carried out in a similar way.
\end{proof}

\begin{corollary}
Let $\rho: K^* \to \mathbb Q_p$ be a ramified homomorphism and consider $r: \Lie G(K) \to \Lie G^\natural(K)$ and $r^\vee: \Lie G^\vee(K) \to \Lie G^{\vee \natural}(K)$ a pair of splittings of the exact sequences of Lie algebras
\[0 \to \omega_{G^\vee}(K) \xrightarrow{\Lie \zeta} \Lie G^\natural(K) \xrightarrow{\Lie \theta} \Lie G(K) \to 0,\]
\[0 \to \omega_G(K) \xrightarrow{\Lie \zeta^\vee} \Lie G^{\vee \natural}(K) \xrightarrow{\Lie \theta^\vee} \Lie G^\vee(K) \to 0,\]
respectively, which are dual with respect to $(\us, \us)^{Del}_M$. Then:
\begin{enumerate}[(i)]
\item There is a $\rho$-splitting $\psi: P(K) \to \mathbb Q_p$.
\item Let $\eta: G(K) \to G^\natural(K)$ and $\eta^\vee: G^\vee(K) \to G^{\vee \natural}(K)$ be the splittings of \eqref{UVE1} and \eqref{UVE2} such that $\Lie \eta = r$ and $\Lie \eta^\vee = r^\vee$. If the following diagrams commute
\[\xymatrix{ 
L(K) \ar@{=}[r] \ar[d]_{u} & L(K) \ar[d]^{u^\natural} \\
G(K) \ar[r]^{\eta} & G^\natural(K)
}
\quad
\xymatrix{
L^\vee(K) \ar@{=}[r] \ar[d]_{u^\vee} & L^\vee(K) \ar[d]^{u^{\vee \natural}} \\
G^\vee(K) \ar[r]^{\eta^\vee} & G^{\vee \natural}(K)
}\]
and $\eta = \tilde \eta$, $\eta^\vee = \tilde \eta^\vee$, where $\tilde \eta$ and $\tilde \eta^\vee$ are the morphisms of Lemma \ref{lem:etas}, then the $\rho$-splitting $\psi: P(K) \to \mathbb Q_p$ of (i) is compatible with the $L \times L^\vee$-linearization of $P$. In particular, if $u(K)$ and $u^\vee(K)$ are injective then $\psi$ induces a $\rho$-splitting of the biextension $Q_M(K)$ of $(M(K), M^\vee(K))$ by $K^*$.
\end{enumerate}
\end{corollary}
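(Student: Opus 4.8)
The plan is to reduce the Corollary to Theorem~\ref{thm:lambda-spl} and to the theorem stated immediately before it, the bridge being the elementary observation that a ramified (continuous, as throughout) homomorphism $\rho\colon K^*\to\mathbb Q_p$ admits a factorization $\rho=\ell\circ\lambda$, where $\lambda\colon K^*\to K$ is a suitable branch of the $p$-adic logarithm and $\ell\colon K\to\mathbb Q_p$ is a $\mathbb Q_p$-linear map. Once this is in place the construction of Theorem~\ref{thm:lambda-spl} gives a $\lambda$-splitting and post-composition with $\ell$ does everything else.

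First I would record the factorization. Write $R$ for the valuation ring, $\mathfrak m$ for its maximal ideal, and $k$ for the (finite) residue field. Since $k^*$ and all $p$-power roots of unity are torsion, $\rho$ is trivial on them, so $\rho$ is governed by its restriction to the principal units $1+\mathfrak m$. For $n$ large, $\log\colon 1+\mathfrak m^n\xrightarrow{\ \sim\ }\mathfrak m^n$ is an isomorphism onto a free $\mathbb Z_p$-module of rank $[K:\mathbb Q_p]$ spanning $K$ over $\mathbb Q_p$, and $1+\mathfrak m^n$ has finite index in $1+\mathfrak m$; a continuous homomorphism out of such a lattice is automatically $\mathbb Z_p$-linear, so $\rho$ produces a unique $\mathbb Q_p$-linear $\ell\colon K\to\mathbb Q_p$ with $\rho=\ell\circ\log$ on $1+\mathfrak m^n$, hence (using that $\mathbb Q_p$ is torsion-free, so a divisibility argument passes from the finite-index subgroup to all of $1+\mathfrak m$) on $1+\mathfrak m$. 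Because $\rho$ is \emph{ramified}, $\rho(R^*)\neq 0$ forces $\rho|_{1+\mathfrak m}\neq 0$, hence $\ell\neq 0$ and $\ell$ is surjective; fixing a uniformizer $\varpi$, one defines the branch $\lambda$ by choosing $\lambda(\varpi)\in K$ with $\ell(\lambda(\varpi))=\rho(\varpi)$, which yields $\rho=\ell\circ\lambda$ on all of $K^*$. This is the one step that is not purely formal; it is classical and underlies the constructions of \cite{ZA90}.

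For part (i): the hypotheses here, namely that $r$ and $r^\vee$ are dual with respect to $(\us,\us)^{Del}_M$, are exactly those of Theorem~\ref{thm:lambda-spl}, which therefore provides, for the branch $\lambda$ fixed above, a $\lambda$-splitting $\psi_\lambda\colon P(K)\to K$. I would then set $\psi:=\ell\circ\psi_\lambda$ and verify directly that it is a $\rho$-splitting: for $h\in\mathbb G_m(K)=K^*$ and $x\in P(K)$,
\[
\psi(h+x)=\ell\big(\lambda(h)+\psi_\lambda(x)\big)=\ell(\lambda(h))+\ell(\psi_\lambda(x))=\rho(h)+\psi(x),
\]
using $\mathbb Q_p$-linearity of $\ell$, while the restriction of $\psi$ to any fiber $P_{g,G^\vee}$ or $P_{G,g^\vee}$ is a group homomorphism because the corresponding restriction of $\psi_\lambda$ is one and $\ell$ is additive. (More abstractly: post-composing a $\rho'$-splitting with a homomorphism of its target produces a splitting for the composite homomorphism.)

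For part (ii): under the additional hypotheses $\eta\circ u=u^\natural$, $\eta^\vee\circ u^\vee=u^{\vee\natural}$ and $\eta=\tilde\eta$, $\eta^\vee=\tilde\eta^\vee$, the theorem immediately preceding this Corollary shows that $\psi_\lambda$ is compatible with the $L\times L^\vee$-linearization of $P$, that is $\psi_\lambda\circ\tau=0$ and $\psi_\lambda\circ\tau^\vee=0$ on $K$-sections. Post-composing with $\ell$ gives $\psi\circ\tau=\ell\circ(\psi_\lambda\circ\tau)=0$ and likewise $\psi\circ\tau^\vee=0$, so the $\rho$-splitting $\psi$ of (i) is compatible with the $L\times L^\vee$-linearization. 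Finally, when $u(K)$ and $u^\vee(K)$ are injective, the remark following the definition of compatibility together with Proposition~\ref{pro:quotient_biext} shows that $\psi$ descends to a $\rho$-splitting of the quotient biextension $Q_M(K)$ of $(M(K),M^\vee(K))$ by $K^*$. The only genuine obstacle is the factorization $\rho=\ell\circ\lambda$ of the second paragraph; the rest of the argument is formal transport along $\ell$.
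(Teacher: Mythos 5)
Your proof is correct and follows the same route as the paper: factor $\rho = \ell \circ \lambda$ through a branch of the $p$-adic logarithm and a $\mathbb Q_p$-linear map, apply Theorem~\ref{thm:lambda-spl} to get a $\lambda$-splitting, and post-compose with $\ell$ (and, for (ii), observe that post-composition preserves vanishing on $\tau$ and $\tau^\vee$). The only difference is that the paper simply cites Zarhin \cite[p.~319]{ZA90} for the factorization, whereas you sketch a self-contained argument for it --- including the observation that ramifiedness guarantees $\ell \neq 0$, which is exactly what makes the factorization through a logarithm branch possible.
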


\begin{proof}
\begin{enumerate}[(i)]
\item 
By \cite[p. 319]{ZA90}, there exists a branch $\lambda: K^* \to K$ of the $p$-adic logarithm and a $\mathbb Q_p$-linear map $\delta: K \to \mathbb Q_p$ such that
\[\xymatrix{
K^* \ar[rr]^{\rho} \ar[dr]_{\lambda} && \mathbb Q_p \\
& K \ar[ru]_{\delta} &
\rlap{\ .} }\]
Let $\psi: P(K) \to K$ be the $\lambda$-splitting constructed as in Theorem \ref{thm:lambda-spl}. Then $\psi_{\rho} := \delta \circ \psi: P(K) \to \mathbb Q_p$ is a $\rho$-splitting of $P(K)$.
\item We have that
\[\psi_{\rho} \circ \tau = \delta \circ \psi \circ \tau = 0 ,\]
and similarly for $\tau^\vee$. Therefore, $\psi_{\rho}$ is compatible with the $L \times L^\vee$-linearization of $P$ and thus induces a $\rho$-splitting of $Q_M(K)$, in the case that $u(K)$ and $u^\vee(K)$ are injective.
\end{enumerate}
\end{proof}


\section{Local pairing between zero-cycles} \label{sec:local_pairing}

In this section, we construct a pairing between disjoint zero-cycles of degree zero on a curve over a local field and its regular locus, which generalizes the local pairing defined in \cite[p. 212]{MT83} in the case of an elliptic curve (see also \cite{CG89}). \\

Let $K$ be a finite extension of $\mathbb Q_p$ and $C$ a semi-normal irreducible curve over $K$. Consider the following commutative diagram
\[\begin{tikzcd}
C'  \ar[d, twoheadrightarrow, "\pi"'] \ar[r, hook, "j'"] & \bar C' \ar[d, twoheadrightarrow, "\bar \pi"] \\
C \ar[r, hook, "j"] & \bar C
\rlap{\ ,}
\end{tikzcd}\]
where $C'$ is the normalization of $C$, $\bar C'$ is a smooth compactification of $C'$, and $\bar C$ (resp. $C$) is the curve obtained from $\bar C'$ (resp. $C'$) by contracting each of the finite sets $\pi^{-1}(x)$, for $x \in C$. Let $S$ be the set of singular points of $C$, $S' := \pi^{-1}(S)$, and $F := \bar C' - C' = \bar C - C$. We recall from Section \ref{sec:alb_pic} the homological Picard 1-motive of $C$ and the cohomological Albanese 1-motive of $C$:
\[\Pic^-(C) = [u: \Div^0_{S'/S}(\bar C', F) \to \Pic^0(\bar C', F)],\]
\[\Alb^+(C) = \Pic^-(C)^\vee = [u^\vee: \Div^0_{F}(\bar C) \to \Pic^0(\bar C)] .\]
Denote by $\bar C_{\reg}$ the set of smooth points of $\bar C$ and let $a^+_{x}: \bar C_\reg \to \Pic^0(\bar C)$ be the Albanese mapping, which depends on a base point $x \in \bar C_{\reg}$ (see \cite[p. 50]{BS01}). Extending by linearity, one obtains a mapping $a^+_{\bar C}: Z_0(\bar C_\reg)_0 \to \Pic^0(\bar C)$ on the group of zero-cycles of degree zero on $\bar C_\reg$; notice that it does not depend on any base point. As usual, we denote by $P$ the Poincar\'e biextension of $(\Pic^-(C), \Alb^+(C))$ by $\mathbb G_m$. We consider a homomorphism $\rho: K^* \to \mathbb Q_p$ and a $\rho$-splitting $\psi: P(K) \to \mathbb Q_p$ which is compatible with the $\Div^0_{S'/S}(\bar C', F) \times \Div^0_{F}(\bar C)$-linearization of $P$. Our aim is to construct a pairing
\[ [\us, \us]_C: (Z_0(C)_0 \times Z_0(C_\reg)_0)' \to \mathbb Q_p ,\]
where $(Z_0(C)_0 \times Z_0(C_\reg)_0)'$ denotes the subset of $Z_0(C)_0 \times Z_0(C_\reg)_0$ consisting of pairs of cycles with disjoint support. \\

First, we define a pairing
\[[\us, \us]'_C: (\Div^0(\bar C', F) \times Z_0(\bar C_\reg)_0)' \to \mathbb Q_p \]
on the set of all pairs $(D, z)$, with $D$ a divisor on $\bar C'$ algebraically equivalent to 0 whose support is contained in $\bar C' \backslash F$, and $z$ a zero-cycle of degree zero on $\bar C_\reg$, satisfying that $\supp D \cap \supp z = \emptyset$. Notice that a divisor $D \in \Div^0(\bar C', F) \subset \Div^0(\bar C')$ corresponds to a line bundle $L(D)$ over $\bar C'$ together with a rational section $s_D: \bar C' \dashrightarrow L(D)$ which is defined on the open subset $\bar C' \setminus \supp D \subset \bar C'$; in particular, $s_D$ is defined on $F$, since $\supp D \cap F = \emptyset$. Moreover, the pullback along $a_{x}^+$ of $P_{[D]}$, the fiber of the Poincar\'e bundle $P$ over $[D] \in \Pic^0(\bar C', F)$, is the restriction of $L(D)$ to $\bar C_{\reg}$, and so $a_{x}^+$ induces a map $a_{x, D}^+: L(D)|_{\bar C_{\reg}} \to P_{[D]}$ by pullback:
\[\begin{tikzcd}
L(D)|_{\bar C_{\reg}} \ar[r, "a^+_{x, D}"] \ar[d] \ar[dr, phantom, "\lrcorner", very near start] & P_{[D]} \ar[d] \\
\bar C_{\reg} \ar[r, "a^+_{x}"] \ar[u, dashed, bend left, "s_D|_{\bar C_{\reg}}"] & \{[D]\} \times \Pic^0(\bar C)
\rlap{\ .}
\end{tikzcd}\]  
Therefore, we can define
\[ [D, \sum n_j x_j]'_C := \sum n_j \psi \circ a_{x, D}^+ \circ s_D(x_j) ,\]
where $\sum n_j x_j \in Z_0(\bar C_\reg)_0$ is a zero-cycle whose support is disjoint from $\supp D$. Notice that since $\sum n_j x_j$ is a zero-cycle then $[D, \sum n_j x_j]'_C$ no longer depends on the base point $x$. \\

When $D \in \Div^0_{S'/S}(\bar C', F) \subset \Div^0(\bar C', F)$ we have that $a_{x, D}^+ \circ s_D = \tau \circ a_{x}^+$:
\[\begin{tikzcd}
L(D)|_{\bar C_{\reg}} \ar[r, "a_{x, D}^+"] \ar[d] \ar[dr, phantom, "\lrcorner", very near start] & P_{u(D)} \ar[d] \\
\bar C_{\reg} \ar[r, "a_{x}^+"] \ar[u, dashed, bend left, "s_D|_{\bar C_{\reg}}"] & \{u(D)\} \times \Pic^0(\bar C) \ar[u, dashed, bend right, "\tau |_{\{D\} \times \Pic^0(\bar C)}"']
\rlap{\ .}
\end{tikzcd}\]  
This implies that $[D, \sum n_j x_j]'_C = 0$ for all $D \in \Div^0_{S'/S}(\bar C', F)$. Notice that, since every closed point in $C'$ is also closed in $\bar C'$, then $Z_0(C')_0 = \Div^0(\bar C', F)$. Moreover, since $\bar C'$ is irreducible, $\Div^0_{S'/S}(\bar C', F) \subset \Div^0(\bar C', F)$ is the free abelian subgroup generated by cycles of the form $x_0 - x_1$, where $\pi(x_0) = \pi(x_1)$; denote this group by $Z_0(S'/S)_0$. 
Recalling that the pushforward of cycles along $\pi$ preserves the degree, we obtain the following exact sequence
\[0 \to Z_0(S'/S)_0 \to Z_0(C')_0 \xrightarrow{\pi_*} Z_0(C)_0 \to 0. \]
Therefore, $[\us, \us]'$ is a pairing on $(Z_0(C')_0 \times Z_0(\bar C_\reg)_0)'$ which is zero when restricted to $(Z_0(S'/S)_0 \times Z_0(\bar C_\reg)_0)'$, yielding a pairing
\[ [\us, \us]''_C: (Z_0(C)_0 \times Z_0(\bar C_\reg)_0)' \to \mathbb Q_p .\]
By restricting to $Z_0(C_\reg)_0 \subset Z_0(\bar C_\reg)_0$ we get the desired pairing
\[ [\us, \us]_C: (Z_0(C)_0 \times Z_0(C_\reg)_0)' \to \mathbb Q_p .\]

We make the remark that since $\bar C'$ is irreducible then $\Div_{F}^0(\bar C) = Z_0(F)_0$, and so the restriction of $a_{\bar C}^+$ to $Z_0(F)_0$ equals $u^\vee$:
\[\begin{tikzcd}
Z_0(F)_0 \ar[r, equal] \ar[d] & \Div^0_{F}(\bar C) \ar[d, "u^\vee"] \\
Z_0(\bar C_{\reg})_0 \ar[r, "a^+_{\bar C}"] & \Pic^0(\bar C)
\rlap{\ .}
\end{tikzcd}\]
Therefore, $[D, z]'_C = \psi \circ \tau^\vee(z) = 0$ for all $z \in Z_0(F)_0$:
\[\begin{tikzcd}
& & K^* \ar[d, hook] \ar[dr, bend left, "\rho"] & \\
& & P_{[D]}(K) \ar[r, "\psi"] \ar[d] & \mathbb Q_p \\
Z_0(F)_0 \ar[r, equal] & \{[D]\} \times \Div_{F}^0(\bar C) \ar[r, "u^\vee"] \ar[ur, bend left, "\tau^\vee|_{\{[D]\} \times \Div_{F}^0(\bar C)}"] & \{[D]\} \times \Pic^0(\bar C) &
\rlap{\ .}
\end{tikzcd}\]


\section{Global pairing on rational points} \label{sec:global_pairing}

We define a global pairing between the rational points of a 1-motive over a global field and its dual. The construction, which is given in Proposition \ref{pro:global_pairingM}, generalizes the global pairing defined in \cite[Lemma 3.1, p. 214]{MT83} in the case of abelian varieties (see also \cite[p. 337]{ZA90}). \\

Let $F$ be a number field endowed with a set of places $\mathcal V$ which are either archimedean or discrete, and such that, for each $c \in F^*$, we have $|c|_v = 1$ for almost all $v \in \mathcal V$. For each place $v$, let $F_v$ denote the completion of $F$ with respect to $v$; for $v$ discrete denote by $\mathcal O_{F_v}$ the ring of integers of $F_v$ and let $\pi_v$ be a uniformizer of $\mathcal O_{F_v}$ such that $\pi_v \in F$. Consider a family $\rho = (\rho_v)_{v \in \mathcal V}$ of homomorphisms 
$$\rho_v: F_v^* \to \mathbb Q_p$$ 
such that $\rho_v(\mathcal O_{F_v}^*) = 0$ for almost all discrete places $v$, and such that the ``sum formula'' $\sum_v \rho_v(c) = 0$ holds for all $c \in F^*$. \\

Let $M_F = [L_F \xrightarrow{u_F} G_F]$ be a 1-motive over $F$, where $G_F$ is an extension of $A_F$ by $T_F$. For each place $v$, denote $M_{F_v} = [L_{F_v} \xrightarrow{u_{F_v}} G_{F_v}]$ its base change to $F_v$, so that $G_{F_v}$ is an extension of $A_{F_v}$ by $T_{F_v}$. Denote by $P_F$ the Poincar\'e biextension of $(M_F, M^\vee_F)$ and by $P_{F_v}$ its base change to $F_v$, which coincides with the Poincar\'e biextension of $(M_{F_v}, M^\vee_{F_v})$. Moreover, denote 
$$\tau_{F_v}: L_{F_v} \times G_{F_v}^\vee \to P_{F_v}, \ \tau^\vee_{F_v}: G_{F_v} \times L_{F_v}^\vee \to P_{F_v}$$
the trivializations associated to the 1-motive $M_{F_v}$ and its dual. \\

Observe that $M_{F_v}$ has good reduction over $\mathcal O_{F_v}$ for almost all discrete places $v$ (see \cite[Lemma 3.3, p. 309]{ABB17}). This means that there exists an $\mathcal O_{F_v}$-1-motive $M_{\mathcal O_{F_v}} = [L_{\mathcal O_{F_v}} \xrightarrow{u_{\mathcal O_{F_v}}} G_{\mathcal O_{F_v}}]$, with $G_{\mathcal O_{F_v}}$ an extension of an abelian scheme $A_{\mathcal O_{F_v}}$ by a torus $T_{\mathcal O_{F_v}}$, whose generic fiber is $M_{F_v}$. Moreover, the Poincar\'e biextension $P_{\mathcal O_{F_v}}$ of $(M_{\mathcal O_{F_v}}, M^\vee_{\mathcal O_{F_v}})$ has generic fiber equal to $P_{F_v}$ and its trivializations
$$\tau_{\mathcal O_{F_v}}: L_{\mathcal O_{F_v}} \times G_{\mathcal O_{F_v}}^\vee \to P_{\mathcal O_{F_v}}, \ \tau^\vee_{\mathcal O_{F_v}}: G_{\mathcal O_{F_v}} \times L_{\mathcal O_{F_v}}^\vee \to P_{\mathcal O_{F_v}}$$
extend $\tau_{F_v}$ and $\tau^\vee_{F_v}$, respectively. \\

Finally, for every $v$ consider a $\rho_v$-splitting $\psi_{v}: P_{F_v}(F_v) \to \mathbb Q_p$ of $P_{F_v}(F_v)$ and assume that, for almost all discrete places $v$ for which $M_{F_v}$ has good reduction, $\psi_v(P_{\mathcal O_{F_v}}(\mathcal O_{F_v})) = 0$. We have the following

\begin{proposition} \label{pro:global_pairingG}
There is a pairing
\[\langle \us,\us \rangle: G_F(F) \times G^\vee_F(F) \to \mathbb Q_p \]
such that if $y \in P_F(F)$ lies above $(g, g^\vee) \in G_F(F) \times G^\vee_F(F)$ then
\begin{equation} \label{global_pairingG}
\langle g, g^\vee \rangle = \sum_v \psi_v(y).
\end{equation}
\end{proposition}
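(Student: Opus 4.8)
The plan is to mimic the classical Mazur--Tate argument for abelian varieties, the key point being that the sum $\sum_v \psi_v(y)$ is finite and independent of the choice of $y$ lying above $(g,g^\vee)$. First I would address well-definedness of the right-hand side of \eqref{global_pairingG}. The fiber $P_{F,(g,g^\vee)}(F)$ is a $K^*$-torsor with $K = F$ (more precisely, the fiber of $P_F(F)$ over $(g,g^\vee)$ is a torsor under $F^*$), so any two lifts $y, y'$ differ by multiplication by some $c \in F^*$: $y' = c + y$. Then $\psi_v(y') = \psi_v(c + y) = \rho_v(c) + \psi_v(y)$ by property (i) of a $\rho_v$-splitting, and summing over $v$ gives $\sum_v \psi_v(y') = \sum_v \rho_v(c) + \sum_v \psi_v(y) = \sum_v \psi_v(y)$ by the sum formula $\sum_v \rho_v(c) = 0$. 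A lift $y$ over an $F$-point always exists because $P_F \to G_F \times G_F^\vee$ is surjective (it is a $\mathbb{G}_m$-torsor over a field point, hence has an $F$-point).

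Next I would check that the sum $\sum_v \psi_v(y)$ has only finitely many nonzero terms, so it is a genuine element of $\mathbb{Q}_p$. For almost all discrete places $v$ the 1-motive $M_{F_v}$ has good reduction over $\mathcal{O}_{F_v}$, the point $(g,g^\vee)$ extends to an $\mathcal{O}_{F_v}$-point of $G_{\mathcal{O}_{F_v}} \times G_{\mathcal{O}_{F_v}}^\vee$ (for all but finitely many $v$, using spreading-out of the $F$-rational point), the chosen lift $y$ extends to a point of $P_{\mathcal{O}_{F_v}}(\mathcal{O}_{F_v})$ (again for all but finitely many $v$, since the $\mathbb{G}_m$-torsor over $\mathcal{O}_{F_v}$ is trivial and $y$ differs from an integral section by a unit; one may need to enlarge the finite set to absorb the places where $\rho_v$ is ramified), and by hypothesis $\psi_v(P_{\mathcal{O}_{F_v}}(\mathcal{O}_{F_v})) = 0$ at such $v$. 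Hence $\psi_v(y) = 0$ for almost all $v$, and the sum is finite. One subtlety to handle with care: when replacing $y$ by another lift $y' = c+y$ with $c \in F^*$, the term $\rho_v(c)$ may be nonzero at the finitely many places where $|c|_v \neq 1$, but that is exactly accounted for by the sum formula; so the finiteness and the independence of $y$ are compatible.

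Finally I would define $\langle g, g^\vee \rangle := \sum_v \psi_v(y)$ for any lift $y$, and verify bilinearity. Biadditivity in $g$: given $g_1, g_2 \in G_F(F)$ with lifts $y_1$ over $(g_1, g^\vee)$ and $y_2$ over $(g_2, g^\vee)$, the sum $y_1 +_2 y_2$ (using the partial group law of the biextension in the first variable's complement, i.e.\ the one compatible with addition in $G_F$ over a fixed $g^\vee$) is a lift over $(g_1 + g_2, g^\vee)$; since each $\psi_v$ restricted to $P_{F_v, G_{F_v}, g^\vee}$ is a homomorphism by property (ii) of a $\rho_v$-splitting, we get $\psi_v(y_1 +_2 y_2) = \psi_v(y_1) + \psi_v(y_2)$, and summing gives additivity. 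Additivity in $g^\vee$ is symmetric, using the other partial group law and the other half of property (ii). The main obstacle I expect is the finiteness/integrality bookkeeping in the second step --- namely ensuring that for almost all $v$ one can simultaneously spread out $g$, $g^\vee$ and the chosen lift $y$ to integral points and invoke $\psi_v(P_{\mathcal{O}_{F_v}}(\mathcal{O}_{F_v})) = 0$, while keeping track of the finitely many ramified $\rho_v$; the bilinearity and well-definedness are then formal consequences of the $\rho_v$-splitting axioms and the sum formula.
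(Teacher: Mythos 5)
Your proof follows the paper's structure exactly: well-definedness via the sum formula $\sum_v \rho_v(c) = 0$, finiteness via integrality at almost all places, and bilinearity from the partial group laws of $P$ together with axiom (ii) of $\rho_v$-splittings. The one imprecision is in the finiteness step: the claim that ``$y$ differs from an integral section by a unit'' at almost all $v$ is not automatic --- a priori the difference is merely some $c_v \in F_v^*$, which need not lie in $\mathcal O_{F_v}^*$. The paper sidesteps this by choosing $y$ from the outset to be an $\mathcal O_F[1/N]$-point of the spread-out biextension $P_{\mathcal O_F[1/N]}$ (for $N$ sufficiently divisible, using good reduction of $M_F$ and the surjectivity of $P_{\mathcal O_F[1/N]}(\mathcal O_F[1/N]) \to G_F(F) \times G_F^\vee(F)$), so that $y \in P_{\mathcal O_{F_v}}(\mathcal O_{F_v})$ for all $v \nmid N$ by construction. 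Your variant can be repaired the same way: spread out the $F$-point $y$ itself to a section over $\mathcal O_F[1/N']$ for some $N'$, rather than invoking the local torsor-triviality argument. With that adjustment your argument is complete and coincides with the paper's.
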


\begin{proof}
To prove that the right hand side of \eqref{global_pairingG} is a finite sum, we use the fact that the 1-motive $M_F$ has good reduction over $\mathcal O_{F}\left[1/N\right]$ for $N$ sufficiently divisible (see \cite[Lemma 3.3, p. 309]{ABB17}). This means that $M_F$ extends to a 1-motive $M_{\mathcal O_{F}\left[1/N\right]} = [L_{\mathcal O_{F}\left[1/N\right]} \to G_{\mathcal O_{F}\left[1/N\right]}]$ over $\mathcal O_{F}\left[1/N\right]$, and similarly for $M_F^\vee$. Moreover, the Poincar\'e biextension $P_F$ extends as well to a biextension $P_{\mathcal O_{F}\left[1/N\right]}$ over $\mathcal O_{F}\left[1/N\right]$. We then have a tower of biextensions as follows:
\[\begin{tikzcd}
\mathcal O_{F}\left[1/N\right]^* \ar[r, hookrightarrow] \ar[d, hookrightarrow] & F^* \ar[d, hookrightarrow] \\
P_{\mathcal O_{F}\left[1/n\right]}(\mathcal O_{F}\left[1/N\right]) \ar[r, hookrightarrow] \ar[d, two heads] & P_F(F) \ar[d, two heads] \\
G_{\mathcal O_{F}\left[1/N\right]} \times G^\vee_{\mathcal O_{F}\left[1/N\right]} \ar[r, equal] & G_F(F) \times G^\vee_F(F)
\rlap{\ .}
\end{tikzcd}\]
Therefore, we can always choose $y \in P_{\mathcal O_{F}\left[1/n\right]}(\mathcal O_{F}\left[1/N\right])$ lying above a pair of rational points $(g, g^\vee) \in G_F(F) \times G^\vee_F(F)$. By doing so, we ensure that $y \in P_{\mathcal O_{F_v}(\mathcal O_{F_v})}$ for almost all $v$, and thus $\psi_v(y) = 0$ for almost all $v$. \\

Observe that if $y \in P_F(F)$ lies above $(g, g^\vee)$ then any other element lying above $(g, g^\vee)$ is of the form $c + y$, for $c \in F^*$. From the sum formula we obtain the equalities
\begin{align*}
\sum_v \psi_v(c + y) & =  \sum_v \rho_v(c) + \sum_v \psi_v(y) \\
& =  \sum_v \psi_v(y),
\end{align*}
which proves that the right hand side of \eqref{pro:global_pairingG} indeed defines a map $G_F(F) \times G^\vee_F(F) \to \mathbb Q_p$. It remains to check that it is bilinear. Let $y_1, y_2 \in P_F(F)$ mapping to $(g_1, g^\vee), (g_2, g^\vee) \in G_F(F) \times G^\vee_F(F)$, respectively. Since the $\psi_v$ are $\rho_v$-splittings, we get that
\begin{align*}
\langle g_1 + g_2, g^\vee \rangle & = \sum_v \psi_v(y_1 +_2 y_2) \\
& = \sum_v \psi_v(y_1) + \sum_v \psi_v(y_2) \\
& = \langle g_1, g^\vee \rangle + \langle g_2, g^\vee \rangle. 
\end{align*}
In a similar way we verify linearity in $G^\vee_F$.
\end{proof}

From now on we will assume that $L_F$ and $T_F$ are split. We assume, moreover, that $\psi_v$ factors through a $\rho_v$-splitting $\psi_{A, v}$ of $P_{A_{F_v}}(F_v)$:
$$\psi_v: P_{F_v}(F_v) \to P_{A_{F_v}}(F_v) \xrightarrow{\psi_A, v} \mathbb Q_p.$$
Denote $\mathcal V'$ the set of discrete places $v$ such that $M_{F_v}$ has good reduction and $\psi_v(P_{\mathcal O_{F_v}}(\mathcal O_{F_v})) = 0$. Notice that, necessarily, $\rho_v(\mathcal O_{F_v}^*) = 0$ for all $v \in \mathcal V'$. 

\begin{lemma} \label{formula_global_pairing}
For every $x^\vee \in L^\vee_F(F)$ and $g \in G_F(F)$ there exists $t \in T_F(F)$ such that
\[\sum_v \psi_v \circ \tau^\vee_{F_v}(g, x^\vee) = \sum_{v \in \mathcal V - \mathcal V'} \psi_v \circ \tau^\vee_{F_v}(t^{-1} g, x^\vee),\]
and similarly for every $x \in L_F(F)$ and $g^\vee \in G^\vee_F(F)$.
\end{lemma}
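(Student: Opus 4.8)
The two assertions are symmetric, so the plan is to prove the first and deduce the second by interchanging the roles of $M_F$ and $M_F^\vee$ (so that $\tau^\vee_{F_v}$, $T_F$, $x^\vee$ are replaced by $\tau_{F_v}$, $T_F^\vee$, $x$). Fix $x^\vee\in L^\vee_F(F)$ and $g\in G_F(F)$, and let $\chi\colon T_F\to\mathbb G_m$ be the character corresponding to $x^\vee$ under $L^\vee_F(F)=\sHom_F(T_F,\mathbb G_m)$.

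\emph{Step 1 (linearize in $t$).} Since $\tau^\vee_{F_v}$ is a biextension trivialization over $G_{F_v}\times L^\vee_{F_v}$ it is additive in the $G_{F_v}$-variable for the partial group law of $P_{F_v}$ over the $M_{F_v}$-factor, and since $\psi_v$ is a $\rho_v$-splitting it is a homomorphism on the fibre of that law over a fixed point of $G^\vee_{F_v}$. Hence for every $t\in T_F(F)\subseteq G_F(F)$
\[
\psi_v\circ\tau^\vee_{F_v}(t^{-1}g,x^\vee)=\psi_v\circ\tau^\vee_{F_v}(t^{-1},x^\vee)+\psi_v\circ\tau^\vee_{F_v}(g,x^\vee),
\]
so, inserting this into the right-hand side of the lemma and cancelling $\sum_{v\in\mathcal V-\mathcal V'}\psi_v\circ\tau^\vee_{F_v}(g,x^\vee)$, the assertion becomes equivalent to producing $t$ with
\begin{equation}\label{eq:plan-lem-red}
\sum_{v\in\mathcal V'}\psi_v\circ\tau^\vee_{F_v}(g,x^\vee)=\sum_{v\in\mathcal V-\mathcal V'}\psi_v\circ\tau^\vee_{F_v}(t^{-1},x^\vee).
\end{equation}

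\emph{Step 2 (evaluate the terms).} For any local point $s\in T_{F_v}(F_v)$ the image of $s$ in $A_{F_v}$ is $0$, so $\tau^\vee_{F_v}(s,x^\vee)$ maps under $P_{F_v}(F_v)\to P_{A_{F_v}}(F_v)$ into the part of $P_{A_{F_v}}$ lying over $\{0\}\times A^\vee_{F_v}$, on which the Poincar\'e biextension is canonically rigidified; unwinding the construction of $u^\vee$ and of $\tau^\vee$ from the Poincar\'e trivialization over $L\times L^\vee$ identifies this image with $\chi(s)\in\mathbb G_m(F_v)$ acting on the canonical section. Because $\psi_v$ factors through the $\rho_v$-splitting $\psi_{A,v}$, which is a homomorphism on that fibre sending the canonical section to $0$, we obtain $\psi_v\circ\tau^\vee_{F_v}(s,x^\vee)=\rho_v(\chi(s))$ (up to a sign depending only on conventions) for all $s\in T_{F_v}(F_v)$. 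Taking $s=t^{-1}$, using $\chi(t)\in F^*$ and the sum formula $\sum_v\rho_v(\chi(t))=0$, the right-hand side of \eqref{eq:plan-lem-red} equals $\sum_{v\in\mathcal V'}\rho_v(\chi(t))$. For the left-hand side: for $v\in\mathcal V'$ the good reduction of $M_{F_v}$ lets me write $g=t_vg_v$ with $g_v\in G_{\mathcal O_{F_v}}(\mathcal O_{F_v})$ and $t_v\in T_{F_v}(F_v)$ (using $A_{F_v}(F_v)=A_{\mathcal O_{F_v}}(\mathcal O_{F_v})$ by properness and $H^1_{\fppf}(\mathcal O_{F_v},T_{\mathcal O_{F_v}})=0$); then $\tau^\vee_{F_v}(g_v,x^\vee)\in P_{\mathcal O_{F_v}}(\mathcal O_{F_v})$ is killed by $\psi_v$, so $\psi_v\circ\tau^\vee_{F_v}(g,x^\vee)=\rho_v(\chi(t_v))$. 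As $g$ is integral at almost all places, $t_v$ may be taken trivial outside a finite set $S\subseteq\mathcal V'$, and \eqref{eq:plan-lem-red} reduces to finding $t\in T_F(F)$ with $\sum_{v\in S}\rho_v(\chi(t_v))=\sum_{v\in\mathcal V'}\rho_v(\chi(t))$.

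\emph{Step 3 (choose $t$).} Each $\rho_v$ with $v\in\mathcal V'$ is unramified (as noted just before the statement), so only the valuations $v(\chi(t_v))$ and $v(\chi(t))$ enter, and $v(\chi(t_v))$ lies in $e\mathbb Z$, where $e$ is the content of $\chi$. Using weak approximation for the split torus $T_F$ — equivalently, the surjectivity of $F^*\twoheadrightarrow\bigoplus_{v\in S}\mathbb Z$, which holds because the primes of $F$ outside any finite set generate $\operatorname{Cl}(F)$ — together with the freedom to let $\chi(t)$ have arbitrary valuations at the finitely many places of $\mathcal V-\mathcal V'$, one arranges that $\chi(t)$ has the same $v$-adic valuation as $\chi(t_v)$ at each $v\in S$ and is a $v$-adic unit at every $v\in\mathcal V'-S$; then the two sums agree termwise over $S$ and all remaining terms over $\mathcal V'-S$ vanish, giving \eqref{eq:plan-lem-red} and hence the lemma. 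The technical heart is precisely this last choice of $t$: one must match the finitely many nontrivial local defects of $g$ on $\mathcal V'$ while introducing no new contributions elsewhere on $\mathcal V'$, i.e.\ check that the relevant divisor class supported on $S$ is trivial modulo the places outside $\mathcal V'$.
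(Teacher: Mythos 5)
Your Steps 1 and 2 track the paper's argument faithfully: the additivity of $\psi_v\circ\tau^\vee_{F_v}(\,\cdot\,,x^\vee)$ in the first variable, the identity $\psi_v\circ\tau^\vee_{F_v}(s,x^\vee)=\rho_v(\chi(s))$ for $s\in T_{F_v}(F_v)$ (which is exactly the paper's diagram argument using the extension $0\to T\to P_{G,\{x^\vee\}}\to P_{A,a^\vee}\to 0$ together with the hypothesis that $\psi_v$ factors through $\psi_{A,v}$), and, for $v\in\mathcal V'$, the factorization of $\psi_v\circ\tau^\vee_{F_v}(\,\cdot\,,x^\vee)$ through $G_{F_v}(F_v)/G_{\mathcal O_{F_v}}(\mathcal O_{F_v})\cong T_{F_v}(F_v)/T_{\mathcal O_{F_v}}(\mathcal O_{F_v})\cong\mathbb Z^r$. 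Your reformulation of the target equality is also a correct algebraic rearrangement of what the paper proves.

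The divergence is in Step 3, and there you have a genuine gap that you yourself flag. Your $t_v$ are arbitrary local lifts in $T_{F_v}(F_v)$ of the class of $g$, and you then try to produce a single global $t\in T_F(F)$ whose character value $\chi(t)$ has $v$-adic valuation equal to $v(\chi(t_v))$ for $v\in S$ and valuation $0$ at all $v\in\mathcal V'\setminus S$, with no constraint at $\mathcal V-\mathcal V'$. The surjectivity of $F^*\twoheadrightarrow\bigoplus_{v\in S}\mathbb Z$ that you invoke controls the valuations on the finite set $S$ only and says nothing about $\mathcal V'\setminus S$; and since $\mathcal V-\mathcal V'$ is itself finite, the condition you need is that the class of the divisor $\sum_{v\in S}v(\chi(t_v))\,v$ lie in the subgroup of $\operatorname{Cl}(F)$ generated by the finitely many places outside $\mathcal V'$, which is not automatic and is precisely what you leave unchecked. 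The paper avoids this entirely by a constructive choice: it uses the global uniformizers $\pi_v\in F$ fixed in the setup to take $t_v:=(\pi_v^{n_1^{(v)}},\dots,\pi_v^{n_r^{(v)}})$, where $(n_1^{(v)},\dots,n_r^{(v)})$ is the class of $g$; this makes each $t_v$ an $F$-point so that $t:=\prod_{v\in S}t_v\in T_F(F)$ is automatic, and yields $\chi(t_v)=\pi_v^{\sum n_i^{(v)}m_i}\in F^*$ so that $\psi_w\circ\tau^\vee_{F_w}(t_v,x^\vee)=\rho_w(\chi(t_v))$ vanishes for $w\in\mathcal V'$, $w\ne v$, because $\rho_w$ is unramified there and $\pi_v\in\mathcal O_{F_w}^*$. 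That explicit construction of $t$ via the global $\pi_v$ is the key idea you are missing; your non-constructive class-group approach would not close without a further hypothesis on $\mathcal V'$ or on the ideal class of each $v\in S$.
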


\begin{proof}
Fix $x^\vee \in L^\vee_F(F)$ and $g \in G_F(F)$. Suppose that $L_F^\vee \cong \mathbb Z_F^r$ and let $(m_1, \ldots, m_r) \in \mathbb Z_F^r$ be the element corresponding to $x^\vee$. Notice that this induces an isomorphism $T_F \cong \mathbb G_{m, F}^r$. Consider a discrete place $v$ in $\mathcal V'$. Since $G_{F_v}$ has good reduction then $A_{F_v}(F_v) = A_{\mathcal O_{F_v}}(\mathcal O_{F_v})$, which induces isomorphisms
\begin{equation} \label{G_class}
\frac{G_{F_v}(F_v)}{G_{\mathcal O_{F_v}}(\mathcal O_{F_v})} \cong \frac{T_{F_v}(F_v)}{T_{\mathcal O_{F_v}}(\mathcal O_{F_v})} \cong \mathbb Z^r .
\end{equation}
Since $M_{F_v}$ has good reduction, the following diagram commutes
\[\begin{tikzcd}[column sep=tiny]
& & 0 \ar[rr, hookrightarrow] & & \mathbb Q_p \\
& P_{\mathcal O_{F_v}}(\mathcal O_{F_v}) \ar[d] \ar[ur, "\psi_v |_{P_{\mathcal O_{F_v}}}"] \ar[rr, hookrightarrow] & & P_{F_v}(F_v) \ar[d] \ar[ur, "\psi_v"] & \\
& G_{\mathcal O_{F_v}}(\mathcal O_{F_v}) \times G^\vee_{\mathcal O_{F_v}}(\mathcal O_{F_v}) \ar[rr, hookrightarrow] & & G_{F_v}(F_v) \times G^\vee_{F_v}(F_v) & \\ 
G_{\mathcal O_{F_v}}(\mathcal O_{F_v}) \times L^\vee_{\mathcal O_{F_v}}(\mathcal O_{F_v}) \ar[ur, "Id \times u^\vee_{\mathcal O_{F_v}}"'] \ar[rr, hookrightarrow] \ar[uur, bend left, "\tau^\vee_{\mathcal O_{F_v}}"] & & G_{F_v}(F_v) \times L^\vee_{F_v}(F_v) \ar[ur, "Id \times u^\vee_{F_v}"'] \ar[uur, bend left, "\tau^\vee_{F_v}"] & &
\rlap{\ .}
\end{tikzcd}\]
This implies that the map $\psi_v \circ \tau^\vee_{F_v}(\us, x^\vee)$ factors through the quotient $G_{F_v}(F_v)/ G_{\mathcal O_{F_v}}(\mathcal O_{F_v})$. Thus, any $t_v \in T_{F_v}(F_v)$ whose class in $T_{F_v}(F_v)/T_{\mathcal O_{F_v}}(\mathcal O_{F_v})$ equals that of $g$ satisfies
$$\psi_v \circ \tau^\vee_{F_v}(g, x^\vee) = \psi_v \circ \tau^\vee_{F_v}(t_v, x^\vee),$$
where we identify $t_v$ with the corresponding point in $G_{F_v}(F_v)$. If the class of $g$ corresponds to $(n_1, \ldots, n_r) \in \mathbb Z^r$ under the isomorphism \eqref{G_class}, we may choose $t_v$ of the form $t_v := (\pi_v^{n_{1}}, \ldots, \pi_v^{n_{r}})$; in this way, $t_v$ belongs to $T_F(F)$ and $\psi_w \circ \tau^\vee_{F_w}(t_v, x^\vee) = 0$, for all $w \in \mathcal V'$ such that $w \neq v$.
To prove this last assertion, start by considering any place $w \in \mathcal V$. We have the following commutative diagram with exact rows
\[\begin{tikzcd}
& & \mathbb G_{m, F_w} \ar[r, equal] \ar[d] & \mathbb G_{m, F_w} \ar[d] & \\
0 \ar[r] & T_{F_w} \ar[r, "i"] \ar[d, "\cong"] & P_{G_{F_w}, \{x^\vee\}} \ar[r] \ar[d] \ar[dr, phantom, "\lrcorner", very near start] & P_{A_{F_w}, a^\vee} \ar[r] \ar[d] & 0 \\
0 \ar[r] & T_{F_w} \times \{x^\vee\} \ar[r] & G_{F_w} \times \{x^\vee\} \ar[r] \ar[u, "\tau_{F_w}^\vee", dashed, bend left] & A_{F_w} \times \{a^\vee\} \ar[r] & 0
\rlap{\ ,}
\end{tikzcd}\]
where $a^\vee \in A_{F_w}^\vee(F_w)$ denotes the image of $x^\vee$ by the composition $L_{F_w}^\vee \xrightarrow{u_{F_w}} G_{F_w}^\vee \to A_{F_w}^\vee$.
The map $i$ is the one such that when composed with $P_{G_{F_w}, \{x^\vee\}} \to G_{F_w} \times \{x^\vee\}$ equals the natural injection and when composed with $P_{G_{F_w}, \{x^\vee\}} \to P_{A_{F_w}, a^\vee}$ equals zero. Let $\chi: T_F \to \mathbb G_{m, F}$ be the map corresponding to $x^\vee \in L_F^\vee$. With this notation we have 
$$\tau_{F_w}^\vee(t, x^\vee) = \chi(t) + i(t),$$ 
for all $t \in T_{F_w}$. In particular, for $w \neq v$ in $\mathcal V'$ and $t = t_v$ we get
\begin{align*}
\psi_w \circ \tau^\vee_{F_w}(t_v, x^\vee) & = \psi_w(\chi(t_v) + i(t_v)) \\
& = \rho_w(\chi(t_v)) \\
& = \rho_w(\pi_v^{\sum n_i m_i}) \\
& = (n_1m_1 + \ldots + n_rm_r)\rho_w(\pi_v) \\
& = 0,
\end{align*}
where the second equality is deduced from $\psi_w(i(t_v)) = 0$ (since $\psi_w$ is obtained from a $\rho_w$-splitting of $P_{A_{F_w}}$), and the last one from the fact that $\pi_v \in \mathcal O_{F_w}^*$. \\

Define
$$t := \prod_{v \in \mathcal V'} t_v \in T_F(F).$$
Notice that this is a finite product, since $g \in G_{\mathcal O_{F_v}}(\mathcal O_{F_v})$ for almost all $v \in \mathcal V'$. From the previous equalities, we get that $t$ satisfies
$$\psi_v \circ \tau^\vee_{F_v}(t, x^\vee) = \psi_v \circ \tau^\vee_{F_v}(g, x^\vee),$$
for every $v \in \mathcal V'$. Therefore, we obtain
\begin{align*}
\sum_v \psi_v \circ \tau^\vee_{F_v}(g, x^\vee) & = \sum_{v \in \mathcal V - \mathcal V'} \psi_v \circ \tau^\vee_{F_v}(g, x^\vee) + \sum_{v \in \mathcal V'} \psi_v \circ \tau^\vee_{F_v}(g, x^\vee) \\
& = \sum_{v \in \mathcal V - \mathcal V'} \psi_v \circ \tau^\vee_{F_v}(g, x^\vee) + \sum_{v \in \mathcal V'} \psi_v \circ \tau^\vee_{F_v}(t, x^\vee) \\
& = \sum_{v \in \mathcal V - \mathcal V'} \psi_v \circ \tau^\vee_{F_v}(g, x^\vee) - \sum_{v \in \mathcal V - \mathcal V'} \psi_v \circ \tau^\vee_{F_v}(t, x^\vee) \\
& = \sum_{v \in \mathcal V - \mathcal V'} \psi_v \circ \tau^\vee_{F_v}(t^{-1} g, x^\vee) ,
\end{align*}
where the third equality is derived from
$$\sum_{v} \psi_v \circ \tau^\vee_{F_v}(t, x^\vee) = \sum_v \rho_v(\chi(t)) = 0.$$
\end{proof}


\begin{proposition} \label{pro:global_pairingM}
Suppose that $u_F(K)$ and $u_F^\vee(K)$ are injective, and that the $\rho_v$-splittings $\psi_v$ are compatible with the \sloppy $L_{F_v} \times L_{F_v}^\vee$-linearization of $P_{F_v}$, for every place $v \in \mathcal V - \mathcal V'$. Then the pairing $\langle \us,\us \rangle$ of Proposition \ref{pro:global_pairingG} descends to a pairing
\[\langle \us,\us \rangle_M: M_F(F) \times M^\vee_F(F) \to \mathbb Q_p. \]
\end{proposition}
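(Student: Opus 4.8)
The plan is to realize $M_F(F)$ and $M_F^\vee(F)$ as quotients of $G_F(F)$ and $G_F^\vee(F)$ and to check that $\langle\us,\us\rangle$ annihilates the subgroups by which one divides. Since $L_F$ and $T_F$ are split, so are the dual lattice $L_F^\vee$ and the dual torus $T_F^\vee$, so as recalled in Section~\ref{sec:linearization} one has
\[M_F(F) = G_F(F)/\Ima(u_F(F)), \qquad M_F^\vee(F) = G_F^\vee(F)/\Ima(u_F^\vee(F)),\]
and the injectivity of $u_F(F)$ and $u_F^\vee(F)$ places us, place by place, in the setting of Proposition~\ref{pro:quotient_biext}. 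As $\langle\us,\us\rangle$ is already bilinear by Proposition~\ref{pro:global_pairingG}, it will descend to a (necessarily bilinear) pairing $\langle\us,\us\rangle_M$ on the quotients as soon as we verify $\langle u_F(x), g^\vee\rangle = 0$ and $\langle g, u_F^\vee(x^\vee)\rangle = 0$ for all $x \in L_F(F)$, $x^\vee \in L_F^\vee(F)$, $g \in G_F(F)$ and $g^\vee \in G_F^\vee(F)$.

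I would prove the second identity, the first being entirely symmetric. Fix $g$ and $x^\vee$. The key observation is that the trivialization $\tau_F^\vee \colon G_F \times L_F^\vee \to P_F$ supplies a canonical $F$-rational section $y := \tau_F^\vee(g, x^\vee) \in P_F(F)$ lying above $(g, u_F^\vee(x^\vee))$, whose image in each $P_{F_v}(F_v)$ is $\tau_{F_v}^\vee(g, x^\vee)$ since $\tau_F^\vee$ base-changes to $\tau_{F_v}^\vee$. Plugging this $y$ into the defining formula \eqref{global_pairingG} of $\langle\us,\us\rangle$ yields
\[\langle g, u_F^\vee(x^\vee)\rangle = \sum_v \psi_v\bigl(\tau_{F_v}^\vee(g, x^\vee)\bigr),\]
which is a finite sum by Proposition~\ref{pro:global_pairingG}.

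Then I would invoke Lemma~\ref{formula_global_pairing}, which produces an element $t \in T_F(F)$ (depending on $g$ and $x^\vee$) with
\[\sum_v \psi_v\bigl(\tau_{F_v}^\vee(g, x^\vee)\bigr) = \sum_{v \in \mathcal V - \mathcal V'} \psi_v\bigl(\tau_{F_v}^\vee(t^{-1}g, x^\vee)\bigr).\]
Since $t^{-1}g$ still belongs to $G_F(F) \subset G_{F_v}(F_v)$ and, by hypothesis, $\psi_v$ is compatible with the $L_{F_v} \times L_{F_v}^\vee$-linearization of $P_{F_v}$ for every $v \in \mathcal V - \mathcal V'$, condition~(ii) in the definition of compatibility forces $\psi_v \circ \tau_{F_v}^\vee$ to vanish identically on $G_{F_v}(F_v) \times L_{F_v}^\vee(F_v)$; hence each summand on the right-hand side is zero and $\langle g, u_F^\vee(x^\vee)\rangle = 0$. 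Running the same argument with the second assertion of Lemma~\ref{formula_global_pairing}, and with compatibility applied to $\tau_{F_v}$ (so that $\psi_v \circ \tau_{F_v}$ vanishes on $L_{F_v}(F_v) \times G_{F_v}^\vee(F_v)$), gives $\langle u_F(x), g^\vee\rangle = 0$, which completes the descent.

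Because the substantive work has already been carried out in Lemma~\ref{formula_global_pairing} and in establishing the compatibility hypothesis, I do not expect a serious obstacle here. The two points that still require care are: (a) checking that $\tau_F^\vee(g, x^\vee)$ is genuinely an $F$-rational point of $P_F$ sitting over $(g, u_F^\vee(x^\vee))$ and base-changing compatibly, so that it is a legitimate choice of $y$ in \eqref{global_pairingG}; and (b) observing that the passage to the finite sum over $\mathcal V - \mathcal V'$ produced by Lemma~\ref{formula_global_pairing} is precisely what makes the local compatibility hypothesis applicable, since that hypothesis is only imposed at places outside $\mathcal V'$.
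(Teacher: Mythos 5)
Your proposal is correct and takes essentially the same route as the paper's proof: use the global trivialization $\tau_F^\vee(g,x^\vee)$ as the lift of $(g,u_F^\vee(x^\vee))$, reduce the global sum to $\mathcal V - \mathcal V'$ via Lemma~\ref{formula_global_pairing}, and kill the remaining terms by the compatibility hypothesis (and symmetrically for $\tau_F$). The paper's proof is just a terser version of the same argument.
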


\begin{proof}
Fix $g \in G_F(F)$ and $x^\vee \in L^\vee_F(F)$, and let $t \in T_F(F)$ be the element constructed in Lemma \ref{formula_global_pairing}. We have
\[\sum_v \psi_v \circ \tau^\vee_{F_v}(g, x^\vee) = \sum_{v \in \mathcal V - \mathcal V'} \psi_v \circ \tau^\vee_{F_v}(t^{-1} g, x^\vee) = 0 .\]
Since we have the analogous equality for every $x \in L_F(F)$ and $g^\vee \in G^\vee_F(F)$, then $\langle \us, \us \rangle$ is zero on $G(F) \times \Ima(u^\vee(F))$ and $\Ima(u(F)) \times G^\vee(F)$, inducing a pairing
\[\langle \us,\us \rangle_M: M_F(F) \times M^\vee_F(F) \to \mathbb Q_p. \]
\end{proof}


\bibliography{Bibliography} 
\bibliographystyle{abbrv}

\vspace{2em}


\end{document}